\theoremstyle{plain}
\newtheorem{theorem}{Theorem}
\newtheorem{lemma}[theorem]{Lemma}
\newtheorem{proposition}[theorem]{Proposition}
\newtheorem{definition}[theorem]{Definition}
\numberwithin{theorem}{section}
\numberwithin{equation}{section}
\newcommand{\ip}[2]{\ensuremath{\left\langle#1,#2\right\rangle}}
\newcommand{\R}{\mathbb{R}}
\newcommand{\C}{\mathbb{C}}
\newcommand{\N}{\mathbb{N}}
\newcommand{\LL}{\mathcal{L}}
\newcommand{\FF}{\mathcal{F}}
\newcommand{\WW}{\mathcal{W}}
\newcommand{\B}{\mathbb{B}}
\newcommand{\D}{\mathbb{D}}
\newcommand{\fL}{\mathfrak{L}}
\newcommand{\cA}{\mathcal{A}}
\newcommand{\cS}{\mathcal{S}}
\newcommand{\cF}{\mathcal{F}}
\newcommand{\vm}{\textbf{\textit{m}}}
\newcommand{\cbuf}{C_{b,u}(\mathbb{C}^n)}
\newcommand{\cbuop}{C_{b,u}(\FF^2)}
\newcommand{\bddf}{L^\infty(\C^n)}
\newcommand{\bddop}{\LL(\FF^2)}
\newcommand{\domop}{\WW^{2,\infty}(\FF^2)}
\newcommand{\domoptrace}{\WW^{2,1}(\FF^2)}
\newcommand{\domopp}{\WW^{2,p}(\FF^2)}
\newcommand{\domopq}{\WW^{2,q}(\FF^2)}
\newcommand{\F}{\cF^2(\C^n)}
\newcommand{\cbuC}{\mathcal{C}_{b,u}(\C^n)}
\newcommand{\cbuR}{\mathcal{C}_{b,u}(\R)}
\newcommand{\domf}{\WW^{2,\infty}(\C^n)}
\newcommand{\domfone}{\WW^{2,1}(\C^n)}
\newcommand{\domfp}{\WW^{2,p}(\C^n)}
\newcommand{\bdd}{\fL(\cF^2)}
\newcommand{\rad}{\fL_{rad}(\cF^2)}
\newcommand{\toepalgrad}{\mathfrak{T}_{rad}(L^\infty)}
\newcommand{\toepalg}{\mathfrak{T}(L^\infty)}
\newcommand{\traceop}{\cS^1(\cF^2)}
\newcommand{\tconv}[1]{\varphi_t \ast #1}
\newcommand{\conv}[1]{\varphi \ast #1}
\newcommand{\cbu}{\mathcal{C}_{b,u}(\N_0,\rho)}
\newcommand{\dtwo}{d_\Delta}
\newcommand{\lipext}{f_{\sigma}^+}
\newcommand{\lipextR}{f_{\sigma}}
\newcommand{\actop}[2]{L_{#1}(#2)}
\newcommand{\actf}[2]{\ell_{#1}(#2)}
\newcommand{\Tber}[1]{\varphi_t\ast #1}
\newcommand{\Tr}{\mathrm{Tr}}
\newcommand{\one}{1}
\begin{document}


\keywords{Toeplitz operators, Fock space, quasi-radial Toeplitz algebra, domain of the Laplacian}
\title[The Laplacian of an operator]{The Laplacian of an operator and the radial Toeplitz algebra}

\author{Vishwa Dewage }
\author{Mishko Mitkovski }
\thanks{M.~M. was supported by NSF grant DMS-2000236.}

\address{Department of Mathematics, Clemson University\\
South Carolina, SC 29634, USA\\
E-Mail Dewage:vdewage@clemson.edu
E-Mail Mitkovski: mmitkov@clemson.edu}
\subjclass[2000]{22D25, 30H20, 47B35, 47L80}


\begin{abstract}
    Using tools from quantum harmonic analysis, we show that the domain of the Laplacian of an operator is dense in the Toeplitz algebra over the Fock space $\mathcal{F}^2(\mathbb{C}^n)$. As an application, we provide a simplified treatment of the Gelfand theory of the radial Toeplitz algebra.
\end{abstract}

\maketitle


\section{Introduction}

In a series of papers Su\'arez explores the "Laplacian of an operator" and its connections to Toeplitz operators on the Bergman space $\cA^2(\mathbb{D})$ over the unit disc $\D$ in $\C$ \cite{S04,S05,S08}. Several of his results were generalized to unit ball $\B^n$ in $\C^n$ in \cite{BHV14b, BHV14}. His work yields particularly strong results, some of which were later used in \cite{GMV13} to explore the Gelfand theory of the commutative radial Toeplitz algebra on $\cA^2(\mathbb{D})$ (see \cite{GKV03}), identifying it as an algebra of uniformly continuous sequences. 

The commutativity of Toeplitz algebras and their Gelfand theory has been a popular topic in operator theory \cite{BR22, DOQ18, DO22, EV16, FR23}.
As for the Fock space $\F$, the radial Toeplitz algebra is commutative (see \cite{GV02}) and in \cite{EM16}, using approximation theory, Maximenko and Esmeral prove that it is isometrically isomorphic to  $C^*$-algebra $\cbu$ of all bounded sequences that are uniformly continuous with respect to the square-root metric.

By now, Werner's quantum harmonic analysis (QHA) \cite{W84} is proven to be an effective tool in several fields of mathematics, including time-frequency analysis, mathematical physics, and operator theory \cite{BBLS22, FH23, F19, FR23, KLSW12, KL21, LS20}. In this article, we employ QHA to investigate the Laplacian of an operator on the Fock space $\F$. In particular, as one of our main results, we note that the Toeplitz algebra over $\F$ is the norm closure of the domain of this Laplacian. We then utilize this result to revisit the Gelfand theory of the radial Toeplitz algebra on $\F$, providing a short natural proof of the fact that it is isometrically isomorphic to the sequence algebra $\cbu$.

The article is organized as follows. In section \ref{sec:prelim}, we set the framework of the problem and present an overview of quantum harmonic analysis concepts we use. In section \ref{sec:Laplacian}, we define the Laplacian of an operator and discuss some of its properties. In section \ref{sec:density}, we also prove that the domain of the Laplacian $\domop$ is a dense subset of the Toeplitz algebra $\toepalg$. In section \ref{sec:Gelfand}, we present our new proof of the Gelfand theory of the radial Toeplitz algebra.


\section{Preliminaries} \label{sec:prelim}
Let $d\lambda(z)=e^{-\pi|z|^2}dz$ ($dz$ denotes the Lebesgue measure on $\C^n\simeq \R^{2n}$) be the standard Gaussian measure on $\C^n$. We consider the Bargmann-Fock space $\F$, the closed subspace of $L^2(\C^n, d\lambda)$ consisting of all entire functions that are square integrable with respect to the Gaussian measure $d\lambda$. It is well-known that $\F$ is a reproducing kernel Hilbert space with reproducing kernel 
$K_z(w)=e^{\pi\Bar{z}w}$ and normalized reproducing kernel $$k_z(w)=\frac{K_z(w)}{\|K_z\|}=e^{\pi\Bar{z}w-\frac{\pi}{2}|z|^2}.$$ 
Here we use the notation $\Bar{z}w=\Bar{z}_1w_1+\cdots+\Bar{z_n}w_n$. The orthogonal projection $P:L^2(\C,d\lambda)\to \FF^2(\C^n)$ is given by
\[P f(z)=\ip{f}{K_z}=\int_{\C^n} f(w)e^{\alpha z\Bar{w}}\ d\lambda(w).\]
The monomials $$e_{\vm}(z)=\sqrt{\frac{\pi^\vm}{\vm!}}z^{\vm},\ \ \ \vm\in\N_0^n$$
form an orthonormal basis for $\F$.

We denote the algebra of all bounded operators on $\F$ by $\bdd$. For $1\leq p<\infty$, denote by $\cS^p(\cF^2)$ the space of all Schatten-$p$-class operators on $\F$. We also make the identification $\cS^\infty(\cF^2)=\bdd$ when necessary.

\subsection{Toeplitz operators} For $a:\C^n\to \C$, the Toeplitz operator  $T_a:\FF^2(\C) \to \FF^2(\C)$ with symbol $a$ is formally given by $$T_a f(z)=P(af)(z)=\langle af,K_z\rangle; \ \ \forall \ z\in \C.$$
The \textit{Toeplitz algebra}, denoted by $\toepalg$, is the closed sub-algebra of all bounded operators $\bddop$ generated by the set of all Toeplitz operators with bounded symbols. Since adjoints of Toeplitz operators are also Toeplitz operators, $\toepalg$ is a noncommutative $C^*$-algebra.\\

\subsection{Heat kernel} The heat kernel plays a crucial role in the analysis on the Fock space \cite{BC94, BCI10}.
Let $\varphi_t$ denote the Gaussian approximate identity given by
\[\varphi_t(z)=\frac{1}{t^n}e^{-\frac{\pi|z|^2}{t}}, \ \ t>0.\] 
The function $\varphi_1$ is denoted simply by $\varphi$.
Recall that the Laplacian $\Delta$ acting on twice differentiable functions on $\C^n$ is given by
$$\Delta =\partial_1\Bar{\partial}_1+\cdots+\partial_n\Bar{\partial}_n$$
where $\partial_i$ and $\Bar{\partial}_i$ denotes $i^{th}$ partial derivatives with respect to $z_i$ and $\Bar{z}_i$ respectively.
Then $\varphi_t$ can be viewed as a solution of the heat equation $\Delta u=\pi\frac{\partial}{\partial t} u$, with initial condition $u(0)=\delta_0$. 

For $a\in \bddf$, the convolution $\tconv{a}$ is a solution to the heat equation:
$$\Delta(\tconv{a})=\pi \odv{}{t}(\tconv{a}).$$

\subsection{Quantum harmonic analysis (QHA)}
Here, we discuss convolutions between functions and operators as well as the convolutions between two operators via a unitary representation. We omit proofs and point to \cite{F19, H23, LS18, W84} for details. First, recall the usual translation of a function $f$ on $\C^n$ by $z\in \C^n$ given by 
$$\actf{z}a(w)=a(w-z), \ \forall w\in \C^n$$
and the convolution of two functions $a,\psi:\C^n \to \C$ is given formally by
$$(\psi\ast a)(z):=\int a(z-w)\ \psi(w)dw,\ \ \ z\in \C^n.$$
Note that the above convolution is commutative. 
Denote by $\cbuf$, the $C^*$-algebra of functions in $\bddf$ that are uniformly continuous w.r.t. the Euclidean metric, i.e.
$$\cbuf=\{f\in \bddf \mid z\mapsto\actf{z}{f} \text{ is continuous w.r.t. } \|\cdot\|_\infty \}.$$

Assume $\psi\in L^1(\C^n,dz)$.  Then if $a\in \bddf$, we have $\psi\ast a\in \cbuf$. Also if $a\in L^p(\C^n,dz)$, we have $\psi\ast a\in L^p(\C^n,dz)$ and the convolution satisfies the Young's inequality. \\

To discuss QHA, recall that the Weyl unitary operators acting on $\F$, given by 
$$W_zf(w)=k_z(w)f(w-z);\ \ w\in \C^n, \ f\in \F.$$
The map $z\mapsto W_z$ is an irreducible square-integrable projective unitary representation of $\C^n$.
We define the translation of an operator $S\in \bdd$ by $z\in \C^n$ to be
$$\actop{z}{S}=W_zS W_z^*, \ \ S\in \bdd.$$ 
The translation map $z\to \actop{z}{S}$ is continuous in SOT. The convolution of a function $\psi:\C^n\to \C$ and an operator $S\in \bdd$ is defined formally in the weak sense by
$$S\ast \psi :=\psi\ast S:=\int   \actop{z} S \ \psi(z)dz.$$

\begin{definition}
    We say an operator $S\in \bdd$ is uniformly continuous if the map $z\mapsto \actop{z}{S},\ \C^n\to \bdd$ is continuous. We denote the set of all bounded uniformly continuous operators by $\cbuop$, i.e.
    $$\cbuop=\{S\in \bdd \mid z\mapsto \actop{z}{S} \text{ is continuous w.r.t. operator norm.} \}$$
\end{definition}

Then $\cbuop$ is a $C^*$-subalgebra of $\bdd$ containing the set of all compact operators.

The convolution $\psi\ast S\in \cbuop$ in the following cases where is it well-defined, and in addition satisfies the QHA
Young's inequalities: For $1\leq p\leq \infty$
\begin{enumerate}[label=(\roman*)]
    \item If $\psi\in L^1(\C^n, dz)$ and $S\in \cS^p(\cF^2)$ then $\psi\ast S\in \cS^p(\cF^2)$ and
    $\|\psi\ast S\|_p\leq \|\psi\|_1\|S\|_p$.
    \item If $\psi\in L^p(\C^n,dz)$ $S\in \cS^1(\cF^2)$ then  $\psi\ast S\in \cS^p(\cF^2)$ and there is $C_\pi>0$ s.t.
    $\|\psi\ast S\|_p\leq C_\pi^{1-\frac{1}{p}}\|\psi\|_1\|S\|_p$.
    \item  In addition, $\psi \ast S\in \cbuop$ in both of the above cases.
\end{enumerate}

We have the following basic properties of operator translations and convolutions for the above cases where the convolution is well-defined. Let $\psi,\psi_1,\psi_2:\C^n\to \C$ and $S\in\bdd$ s.t. the following convolutions are defined. Then the translation commutes with convolution and the convolution is associative:
    \begin{enumerate}[label=(\roman*)]
        \item $\actop{z}{\psi\ast S}=(\actf{z} {\psi})\ast S =\psi\ast (\actop{z}{S}) $.
        \item $\psi_1\ast (\psi_2\ast S)=(\psi_1\ast \psi_2)\ast S=(\psi_2\ast \psi_1)\ast S$.
    \end{enumerate} 
Let $\{\psi_t\}$ be an approximate identity in $L^1(\C^n,dz)$. Then for $1\leq p <\infty$ and $S\in\cS^p(\cF^2)$, we have that $\psi_t \ast S\to S$ in Schatten-$p$ norm. Also, 
$$\cbuop=\{S\in \bdd \mid \psi_t \ast S\to S \text{ in operator norm}\}.$$

We define the convolution between two operators $S\in \bdd$ and $T\in \traceop$, denoted $S\ast T$, to be the function on $\C^n$ given by
\[S\ast T(z):=\Tr(SL_z(UTU))\]
where the self-adjoint operator $U$ on $\F$ is given by
$$(Uf)(z)=f(-z),\ \ \ z\in \C^n,\ f\in \F.$$
Then $T\ast S\in \cbuC$ and $\|T\ast S\|_\infty\leq \|T\|_1\|S\|$.

Furthermore, if $1\leq p\leq \infty$ and $S\in  \cS^p(\cF^2)$, then $S\ast T\in L^p(\C^n, dz)$ and satisfies the following QHA Young's inequality:
$$\|S\ast T\|_p\leq C_\pi^{\frac{1}{p}}\|S\|_p\|T\|_1.$$
For $A,C\in\bdd$, $B\in \traceop$, the following identities hold
    \begin{enumerate}[label=(\roman*)]
        \item $A\ast B= B\ast A$.
        \item $(A\ast B)\ast C=A\ast (B\ast C)$ if one of $A$ and $C$ is trace-class.
    \end{enumerate}  
 Also if $A,B\in \traceop$ and $a\in \bddf$, we have
$$a\ast (A\ast B)=(a\ast A)\ast B.$$

\subsection{Toeplitz operators as convolutions} Let $\Phi\in \bdd$ be the rank one operator $\Phi:=k_0\otimes \overline{k_0}=1\otimes\overline{1}$. 
One sees that for $a\in \bddf$, we have
\[T_a=a\ast \Phi.\] 
due to the fact that $L_z\Phi=k_z\otimes k_z$. This simple observation links Toeplitz operators and quantum harmonic analysis. As a result of the basic properties of convolutions, we get: for $a\in L^\infty(G)$ and $\psi\in L^1(G)$, $\actop{z}{T_a}=T_{\actf{z} a}$ and  $\psi\ast T_a= T_{\psi \ast a}$.

Since $a\ast \Phi\in \cbuop$ we immediately see that the set of all Toeplitz operators with bounded symbols, and thus the whole Toeplitz algebra $\toepalg$ is contained in $\cbuop$. In ~\cite{F19}, Fulsche proves that these two $C^*$-algebras coincide.

\subsection{Berezin transform as a convolution} The Berezin transform of a function $a:\C^n\to \C$ is formally defined to be
$$B(a)(z)=\ip{ak_z}{k_z}, \ z\in\C^n,$$
where $k_z$'s are the normalized reproducing kernels in $\F$.
Likewise, the Berezin transform of the operator $S$, is given by
$$B(S)(z)=\ip{Sk_z}{k_z}, \ z\in\C^n.$$
We see that $B(T_a)=B(a).$

One easily verifies that $\varphi=\Phi\ast \Phi$ and
\[B(a)=\varphi\ast a, \text{ and } B(S)=\Phi\ast S.\] Therefore, both $B(a)$ and $B(S)$ are uniformly continuous functions. In addition,
\[\varphi\ast S=(\Phi\ast \Phi)\ast S=(\Phi\ast S)\ast \Phi= T_{B(S)},\] for every bounded operator $S$.

\section{Laplacian of an operator}\label{sec:Laplacian}

In \cite{S08}, Su\'arez discusses the notion of Laplacian of an operator using the Berezin transform for the Bergman space over the unit ball $\cA^2(\D)$. Here we define the "Laplacian of an operator" on $\F$ which is the analog of the Laplacian that was discussed in \cite{S08} for $\cA^2(\D)$, and later in section \ref{sec:Gelfand}, we make use of this Laplacian to provide a short proof of a result on the Gelfand theory of the radial Toeplitz algebra. We note that this Laplacian aligns well with QHA much like its classical counterpart does with classical harmonic analysis: we have an associated heat equation and the Laplacian behaves well with convolutions. In addition, this respects Toeplitz quantization:
$$\Delta T_a=T_{\Delta a}$$
when $\Delta a$ exists and is bounded.

For $1\leq p\leq \infty$, consider the linear space of functions 
$$\domfp=\{f:\C^n\to \C\mid \Delta f \text{ exists and } \Delta f\in L^p(\C^n,dz)\}.$$


\begin{definition}
    Let $1\leq p \leq \infty$. We define the linear subspace $\domopp$ of $\bdd$ as follows:
    $$\domopp=\{S\in \bdd\mid \Delta B(S)=B(T) \text{ for some } T\in \cS^p(\cF^2) \}.$$
\end{definition}

Note that when $1\leq p\leq q\leq \infty$, we have $\cS^p(\cF^2)\subset \cS^q(\cF^2)$ and thus $\domopp\subset \domopq$ and particularly we have $\domopp \subset \domop$.  

\begin{definition}
    Assume $S\in \domop$, i.e. $\Delta B(S)=B(T),$ for some $T\in \bdd$. The operator $T$ is called "the Laplacian of $S$" and we write $$\Delta S=T.$$ 
\end{definition}

We call the linear space $\domop$ the "\textit{domain of the Laplacian}".


\subsection{Properties of operator Laplacian.}

Here we establish a few basic properties of the Laplacian and provide a few examples of operators in the domain of the Laplacian $\domop$ that includes Toeplitz operators.

The following proposition addresses the behavior of the Laplacian with convolutions.

\begin{proposition}\label{prop:domain lemma}
    Let $1\leq p\leq \infty$, $a\in L^p(\C^n,dz)$, $\psi\in L^1(\C^n, dz)$ and $S\in \bdd$. Then 
    \begin{enumerate}
        \item If $\psi\in \domfone$ then $\psi\ast a\in \domfp$ and  $\Delta (\psi\ast a)=(\Delta\psi)\ast a$.
        \item If $a\in \domfp$ then $\psi\ast a\in \domopp$ and $\Delta (\psi\ast a)=\psi\ast (\Delta a)$.
        \item If $\psi\in \domfone$ then $\psi\ast S\in \domop$ and $\Delta (\psi\ast S)=(\Delta\psi)\ast S$.
        \item If $S\in \domop$ then $\psi\ast S\in \domop$ and $\Delta (\psi\ast S)=\psi\ast \Delta S$.
        \item If $a\in \domfp$ and $S\in \traceop$, then $a\ast S\in \domopp$ and $\Delta (a\ast S)=(\Delta a)\ast S$.
        \item If $S\in \domoptrace$, then $a\ast S\in \domopp$ and $\Delta (a\ast S)=a\ast \Delta S$.
    \end{enumerate}
\end{proposition}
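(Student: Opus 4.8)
The plan is to reduce everything to two facts that are already available in the excerpt: first, that convolution commutes with operator translation and is associative (the basic properties of QHA convolutions), and second, that the Berezin transform of a convolution factors nicely, namely $B(\psi\ast S)=\Phi\ast(\psi\ast S)=\psi\ast(\Phi\ast S)=\psi\ast B(S)$ for functions, and analogously $B(a\ast S)=a\ast B(S)$ for $S$ trace-class. Once the Berezin transforms are expressed as ordinary function convolutions, parts (3)--(6) will follow from parts (1)--(2) applied to the functions $B(S)$ (or $B(\Delta S)$), together with the defining property $\Delta B(\,\cdot\,)=B(\Delta(\,\cdot\,))$ of the operator Laplacian.

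For part (1), I would differentiate under the integral sign in $(\psi\ast a)(z)=\int a(z-w)\psi(w)\,dw$: since $\Delta$ acts on the $z$-variable and $\psi\in\domfone$ means $\Delta\psi\in L^1$, one moves the Laplacian onto the $\psi$-factor by the change of variables $w\mapsto z-w$, obtaining $\Delta(\psi\ast a)=a\ast(\Delta\psi)=(\Delta\psi)\ast a$; the Young inequality for function convolutions ($L^1\ast L^p\subset L^p$) gives $\Delta(\psi\ast a)\in L^p$, hence $\psi\ast a\in\domfp$. Part (2) is the symmetric statement: here $\Delta a\in L^p$ by hypothesis, and one puts the Laplacian on the $a$-factor instead, again justified by differentiation under the integral and commutativity of the convolution; the conclusion $\psi\ast a\in\domfp$ follows the same way. (I should be careful that in (2) the displayed conclusion reads $\domopp$ in the statement but clearly must be $\domfp$ — a function lies in a function space — and I would state it as $\domfp$.) For part (3), write $B(\psi\ast S)=\Phi\ast(\psi\ast S)=\psi\ast(\Phi\ast S)=\psi\ast B(S)$ using associativity; since $B(S)\in\cbuf\subset L^\infty$ and $\psi\in\domfone$, part (1) gives $\Delta(\psi\ast B(S))=(\Delta\psi)\ast B(S)\in L^\infty$. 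But $(\Delta\psi)\ast B(S)=(\Delta\psi)\ast(\Phi\ast S)=\Phi\ast((\Delta\psi)\ast S)=B((\Delta\psi)\ast S)$, and $(\Delta\psi)\ast S\in\bdd$ by the QHA Young inequality ($L^1\ast\cS^\infty\subset\cS^\infty$); therefore $\Delta B(\psi\ast S)=B((\Delta\psi)\ast S)$, which is exactly the statement that $\psi\ast S\in\domop$ with $\Delta(\psi\ast S)=(\Delta\psi)\ast S$. Part (4) is identical in form: if $S\in\domop$ with $\Delta S=T\in\bdd$, then $B(\psi\ast S)=\psi\ast B(S)$, so by part (2) (with $a=B(S)\in\domf$, since $\Delta B(S)=B(T)\in L^\infty$) we get $\Delta B(\psi\ast S)=\psi\ast\Delta B(S)=\psi\ast B(T)=\Phi\ast(\psi\ast T)=B(\psi\ast T)$, and $\psi\ast T\in\bdd$, so $\psi\ast S\in\domop$ with $\Delta(\psi\ast S)=\psi\ast\Delta S$.

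Parts (5) and (6) are the operator-convolution analogues, using $B(a\ast S)=a\ast B(S)$ (valid since $S\in\traceop$, via the identity $a\ast(A\ast B)=(a\ast A)\ast B$ with the Berezin factorization $B(S)=\Phi\ast S$ and $\Phi\in\traceop$). In (5), $a\in\domfp$ and $S$ trace-class give, by part (1) with the roles reversed or directly by part (2) applied with the function $a$ carrying the Laplacian, $\Delta(a\ast B(S))=(\Delta a)\ast B(S)=B((\Delta a)\ast S)$, with $(\Delta a)\ast S\in\cS^p$ by the QHA Young inequality ($L^p\ast\cS^1\subset\cS^p$); hence $a\ast S\in\domopp$. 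In (6), $S\in\domoptrace=\WW^{2,1}(\FF^2)$ means $\Delta B(S)=B(\Delta S)$ with $\Delta S\in\cS^1$, so $\Delta(a\ast B(S))=a\ast\Delta B(S)=a\ast B(\Delta S)=B(a\ast\Delta S)$, and $a\ast\Delta S\in\cS^p$ again by Young; hence $a\ast S\in\domopp$ with $\Delta(a\ast S)=a\ast\Delta S$.

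The main obstacle I anticipate is purely technical: rigorously justifying the interchange of $\Delta$ with the convolution integral in parts (1) and (2) — i.e., differentiation under the integral sign, and the claim that $\Delta$ "exists" for the convolved function in the required pointwise-a.e. or distributional sense. This is where one needs to invoke that $\psi$ (resp. $a$) has a Laplacian in $L^1$ (resp. $L^p$) and that the other factor is merely bounded or in $L^p$; a clean way is to note $\varphi_t\ast(\Delta\psi)=\Delta(\varphi_t\ast\psi)\to\Delta\psi$ and pass to the limit, or simply to work with the heat-semigroup characterization. Everything downstream (parts 3--6) is then bookkeeping with the associativity identities and the QHA Young inequalities already granted in the preliminaries, so once (1)--(2) are nailed down the rest is routine.
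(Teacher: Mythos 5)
Your proposal is correct and follows essentially the same route as the paper: treat (1)--(2) as the classical function statements and derive (3)--(6) by pulling the convolution through the Berezin transform $B(\cdot)=(\cdot)\ast\Phi$ and invoking associativity of QHA convolutions, so that each operator Laplacian identity reduces to the corresponding function one. The paper writes out only (4) and declares the rest ``similar,'' whereas you sketch all six; your observation that $\domopp$ in statement (2) should read $\domfp$ is also correct, since $\psi\ast a$ is a function.
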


\begin{proof}
    The first two statements are classical statements. Proofs of others are similar. We prove only the statement (4). Note that since $S\in \domop$,
    $$\Delta(S\ast \Phi)=(\Delta S)\ast \Phi$$
    and hence $S\ast \Phi \in \domf$. Thus by associativity of the convolutions and statement (2),
    \begin{align*}
        \Delta B(\psi\ast S)&=\Delta((\psi\ast S)\ast \Phi)\\
        &= \Delta(\psi\ast (S\ast \Phi))\\
        &= \psi\ast \Delta(S\ast \Phi)\\
        &= \psi\ast ((\Delta S)\ast \Phi)\\
        &= B(\psi\ast \Delta S),
    \end{align*}  
    proving the lemma.
\end{proof}

Now we compute $\Delta \Phi$ for the rank one operator $\Phi$.
\begin{proposition}\label{prop:Phi in dom}
    The rank one operator $\Phi$ is in $\domoptrace$ and  $$\Delta \Phi= \pi\sum_{|\vm|=1}(E_\vm-\Phi). $$
\end{proposition}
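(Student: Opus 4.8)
The plan is to reduce the whole statement to an explicit computation of Berezin transforms, using the dictionary from the preliminaries. First I would recall that $B(\Phi) = \Phi\ast\Phi = \varphi$, so that $B(\Phi)(z) = e^{-\pi|z|^2}$; consequently, finding $\Delta\Phi$ amounts to exhibiting an operator $T$ (which will turn out to be trace-class) with $B(T) = \Delta\varphi$. A direct application of $\Delta = \sum_{i=1}^{n}\partial_i\bar\partial_i$ to $\varphi(z) = e^{-\pi\sum_j z_j\bar z_j}$ gives $\bar\partial_i\varphi = -\pi z_i\varphi$ and then $\partial_i\bar\partial_i\varphi = (-\pi + \pi^2|z_i|^2)\varphi$, so that $\Delta\varphi(z) = \pi(\pi|z|^2 - n)e^{-\pi|z|^2}$.

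Next I would compute the Berezin transform of the rank-one projection $E_\vm = e_\vm\otimes\overline{e_\vm}$ onto the monomial basis vector. Using $\ip{f}{k_z} = f(z)e^{-\frac{\pi}{2}|z|^2}$ together with $e_\vm(z) = \sqrt{\pi^\vm/\vm!}\,z^\vm$, one obtains $B(E_\vm)(z) = \abs{\ip{k_z}{e_\vm}}^2 = \tfrac{\pi^{|\vm|}}{\vm!}\abs{z^\vm}^2 e^{-\pi|z|^2}$. For $|\vm| = 1$, with $\vm$ the $j$-th standard basis multi-index, this is $\pi\abs{z_j}^2 e^{-\pi|z|^2}$, so summing over the $n$ such multi-indices yields $\sum_{|\vm|=1} B(E_\vm)(z) = \pi|z|^2 e^{-\pi|z|^2}$, while $\sum_{|\vm|=1} B(\Phi)(z) = n\,e^{-\pi|z|^2}$.

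Combining the two computations, $B\bigl(\pi\sum_{|\vm|=1}(E_\vm - \Phi)\bigr)(z) = \pi(\pi|z|^2 - n)e^{-\pi|z|^2} = \Delta\varphi(z) = \Delta B(\Phi)(z)$. Since $\pi\sum_{|\vm|=1}(E_\vm - \Phi)$ is a finite linear combination of rank-one operators, it is finite-rank, hence in $\traceop$; so by the definition of $\domoptrace$ we conclude $\Phi\in\domoptrace$, and by the definition of the operator Laplacian — using injectivity of the Berezin transform on $\bdd$, which is what makes $\Delta\Phi$ well-defined — we get $\Delta\Phi = \pi\sum_{|\vm|=1}(E_\vm - \Phi)$.

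I do not expect a genuine obstacle: the argument is entirely a calculation. The only points requiring care are the bookkeeping of the $\pi$-normalizations coming from the Gaussian measure $d\lambda$ and the reproducing kernel $K_z(w) = e^{\pi\bar z w}$ (in particular getting the constant in $B(E_\vm)$ correct, and remembering $\|K_z\| = e^{\frac{\pi}{2}|z|^2}$), and the observation that trace-classness of the candidate Laplacian is free because it is finite-rank.
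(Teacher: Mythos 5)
Your proof is correct and follows essentially the same route as the paper: compute $\Delta B(\Phi) = \Delta\varphi$, recognize it as $B\bigl(\pi\sum_{|\vm|=1}(E_\vm-\Phi)\bigr)$, and conclude by trace-classness of the finite-rank candidate and injectivity of the Berezin transform. You spell out the Berezin transforms $B(E_\vm)$ in more detail than the paper does, but the argument is the same.
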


\begin{proof}
     Note that $B(\Phi)(z)=\varphi(z)=e^{-\pi|z|^2}.$ Thus we compute
    \begin{align*}
        \Delta B(\Phi)(z) &= \pi(\pi|z|^2-n)e^{-\pi|z|^2}\\
        &= \pi\Big(\sum_{|\vm|=1} B(E_\vm)(z)-B(\Phi)(z)  \Big)\\
        &=B\Big( \pi\sum_{|\vm|=1}(E_\vm-\Phi)\Big)
    \end{align*}
    Also $E_\vm,\Phi\in \traceop$. Therefore $\Phi\in \domoptrace$ and the claim is true.
\end{proof}

Now we show that all Toeplitz operators are in $\domop$.

\begin{proposition}\label{prop:Toeplitz in dom}
    Let $1\leq p\leq \infty$. Then the set of all Toeplitz operators with symbols in $L^p(\C^n, dz)$ is contained in $\domop$. Moreover, if $a\in L^p(\C^n, dz)$,
    $$\Delta T_a= a\ast \Delta \Phi.$$
    In addition, if $a\in \domfp$, 
    we have
    $$\Delta T_a= T_{\Delta a}$$
\end{proposition}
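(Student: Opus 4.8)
The plan is to deduce the whole statement from the convolution identity $T_a=a\ast\Phi$, the computation of $\Delta\Phi$ in Proposition~\ref{prop:Phi in dom}, and the convolution-compatibility statements in Proposition~\ref{prop:domain lemma}.

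First I would note that for $a\in L^p(\C^n,dz)$ the convolution $a\ast\Phi$ is a well-defined operator lying in $\cS^p(\cF^2)\subseteq\bdd$ (by the QHA Young inequality, since $\Phi\in\traceop$), and that it coincides with the Toeplitz operator $T_a$ exactly as in the $L^\infty$ case. Since $\Phi\in\domoptrace$ by Proposition~\ref{prop:Phi in dom}, statement (6) of Proposition~\ref{prop:domain lemma} applies with $S=\Phi$ and yields $a\ast\Phi\in\domopp\subseteq\domop$ together with $\Delta(a\ast\Phi)=a\ast\Delta\Phi$. Rewriting $a\ast\Phi=T_a$ gives $\Delta T_a=a\ast\Delta\Phi$, which is the first part.

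For the refined formula, assume in addition $a\in\domfp$, so that $\Delta a\in L^p(\C^n,dz)$. Now I would apply statement (5) of Proposition~\ref{prop:domain lemma} with the trace-class operator $S=\Phi$: this gives $a\ast\Phi\in\domopp$ and $\Delta(a\ast\Phi)=(\Delta a)\ast\Phi$. Since $(\Delta a)\ast\Phi=T_{\Delta a}$ by the same identity applied to the symbol $\Delta a$, we conclude $\Delta T_a=T_{\Delta a}$. The two formulas are consistent: $a\ast\Delta\Phi=T_{\Delta a}$ when $a\in\domfp$, which can alternatively be read off from the uniqueness of the operator Laplacian (itself a consequence of the injectivity of the Berezin transform on $\bdd$).

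I do not expect a genuine obstacle here: the proposition is essentially a corollary of the apparatus already in place. The only points needing a little care are (i) checking that $a\ast\Phi$ is a bounded operator and really equals $T_a$ for $a\in L^p$ rather than merely $a\in L^\infty$, which is exactly the content of the QHA Young inequality for function--operator convolutions; and (ii) using $\Phi=1\otimes\overline{1}$ in the two roles demanded by Proposition~\ref{prop:domain lemma} --- as a member of $\domoptrace$ in statement (6) and as a member of $\traceop$ in statement (5) --- both valid because $\Phi$ is rank one. When $a\notin\domfp$ the pointwise Laplacian need not exist, so $\Delta T_a=a\ast\Delta\Phi$ is the correct general form, with $T_{\Delta a}$ recovering it in the smooth case.
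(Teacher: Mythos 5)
Your proof is correct and follows essentially the same path as the paper: write $T_a=a\ast\Phi$, invoke Proposition~\ref{prop:Phi in dom} for $\Delta\Phi$, and feed this into Proposition~\ref{prop:domain lemma}. One small improvement: the paper's proof cites statement~(4) of Proposition~\ref{prop:domain lemma} for the first claim, which as stated requires the convolving function to be in $L^1$; for general $a\in L^p(\C^n,dz)$ the applicable clause is statement~(6) (with $S=\Phi\in\domoptrace$), which is exactly what you use.
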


\begin{proof}
    Since a Toeplitz operator $T_a$ can be written as $T_a=a\ast \Phi$, the first claim is a direct consequence of Proposition \ref{prop:Phi in dom} and statement (4) in Proposition \ref{prop:domain lemma}. The second claim follows from statement (5) of Proposition \ref{prop:domain lemma}.
\end{proof}

\subsection{Laplacian and the convolution between two operators}
Here we investigate how the Laplacian behaves with the convolution between two operators. We also discuss several equivalent conditions for an operator to be in $\domop$. 

In the following propositiom, we prove that $B(S)\in \domf$ for any operator $S\in \bdd$.
\begin{proposition}\label{prop:Laplacian of B(S)}
    Let $1\leq p\leq \infty$ and let $S\in \cS^p(\cF^2)$. Then $B(S)\in \domfp\cap \domf$ and
    $$\Delta B(S)= \Delta (S\ast \Phi)=S\ast \Delta \Phi.$$
\end{proposition}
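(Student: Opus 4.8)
The plan is to first establish the pointwise identity $\Delta B(S) = S\ast\Delta\Phi$ and then read off everything else. Recall $B(S) = \Phi\ast S = S\ast\Phi$; since $\Phi$ is rank one, $z\mapsto L_z\Phi$ is real-analytic in the trace norm, so $B(S)(z) = \Tr(S\,L_z\Phi)$ is a $C^\infty$ function and its classical Laplacian exists and agrees with its distributional one. Once $\Delta B(S) = S\ast\Delta\Phi$ is known: by Proposition~\ref{prop:Phi in dom} we have $\Delta\Phi\in\traceop$, so the QHA Young inequality for the operator--operator convolution gives $S\ast\Delta\Phi\in L^p(\C^n,dz)$ whenever $S\in\cS^p(\cF^2)$, and in any case $S\ast\Delta\Phi\in\cbuf\subset L^\infty(\C^n)$; hence $B(S)\in\domfp\cap\domf$ with the asserted Laplacian.

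To prove the identity I would run a heat-semigroup argument. For $t>0$ set $\Psi_t:=\varphi_t\ast\Phi\in\traceop$. By associativity of the QHA convolutions, $\varphi_t\ast B(S) = \varphi_t\ast(\Phi\ast S) = (\varphi_t\ast\Phi)\ast S = S\ast\Psi_t$ as functions on $\C^n$. Since $B(S)\in\cbuf$ and $\{\varphi_t\}$ is an approximate identity in $L^1(\C^n)$, the left side converges to $B(S)$ uniformly as $t\to0^+$, so $\Delta(\varphi_t\ast B(S))\to\Delta B(S)$ in $\mathcal{D}'(\C^n)$. On the other hand, $\varphi_t\ast B(S)$ is the Gaussian convolution of the bounded function $B(S)$, hence solves the heat equation, giving
\[
\Delta(\varphi_t\ast B(S)) = \pi\,\odv{}{t}(\varphi_t\ast B(S)) = \pi\,\odv{}{t}(S\ast\Psi_t) = \pi\, S\ast\odv{}{t}\Psi_t ,
\]
where differentiation under the trace is justified because $t\mapsto\Psi_t$ is differentiable in trace norm with $\odv{}{t}\Psi_t = (\odv{}{t}\varphi_t)\ast\Phi = \tfrac1\pi(\Delta\varphi_t)\ast\Phi$. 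By Proposition~\ref{prop:domain lemma}(3)--(4) applied to $\Phi\in\domoptrace$ we have $(\Delta\varphi_t)\ast\Phi = \Delta(\varphi_t\ast\Phi) = \varphi_t\ast\Delta\Phi$, so $\Delta(\varphi_t\ast B(S)) = S\ast(\varphi_t\ast\Delta\Phi)$. Since $\Delta\Phi\in\traceop$, $\varphi_t\ast\Delta\Phi\to\Delta\Phi$ in trace norm, and therefore $S\ast(\varphi_t\ast\Delta\Phi)\to S\ast\Delta\Phi$ uniformly by $\|T\ast S\|_\infty\le\|T\|_1\|S\|$. Matching the two limits yields $\Delta B(S) = S\ast\Delta\Phi$.

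The step I expect to need real care is the bookkeeping with mixed function/operator convolutions: verifying the associativity $\varphi_t\ast(\Phi\ast S) = (\varphi_t\ast\Phi)\ast S$ (the left side a function convolution, the right side an operator--operator convolution), and that $t\mapsto\Psi_t$ is trace-norm differentiable with the stated derivative. Both reduce to unwinding the definition $A\ast B(z)=\Tr(AL_z(UBU))$ together with the QHA calculus, but they are where precision is required. As an alternative one can bypass the heat flow and argue directly: differentiate $B(S)(z) = \Tr(S\,L_z\Phi)$ under the trace to get $\Delta B(S)(z) = \Tr\!\big(S\,\Delta_z(L_z\Phi)\big)$; the cocycle relation $L_{z_0}\circ L_{z-z_0}=L_z$ gives the covariance $\Delta_z(L_z\Phi) = L_z\big((\Delta_w L_w\Phi)|_{w=0}\big)$, and the matrix entries $\langle(\Delta_w L_w\Phi)|_{w=0}\,e_{\vm},e_{\vn}\rangle = \Delta_w\big(e^{-\pi|w|^2}e_{\vm}(w)\overline{e_{\vn}(w)}\big)\big|_{w=0}$ recover $\Delta\Phi = \pi\sum_{|\vm|=1}(E_{\vm}-\Phi)$ exactly as in Proposition~\ref{prop:Phi in dom}; since $U(\Delta\Phi)U=\Delta\Phi$, this equals $\Tr(S\,L_z(U\Delta\Phi\,U)) = (S\ast\Delta\Phi)(z)$.
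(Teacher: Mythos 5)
Your proof is correct, but it takes a genuinely different route from the paper's. The paper proceeds by brute force: having computed $\Delta\Phi=\pi\sum_{|\vm|=1}(E_\vm-\Phi)$ in Proposition~\ref{prop:Phi in dom}, it expands $(S\ast\Delta\Phi)(z)=\pi\sum_{|\vm|=1}\bigl(\ip{SW_ze_\vm}{W_ze_\vm}-\ip{SW_z1}{W_z1}\bigr)$ via the definition of operator--operator convolution, writes everything in terms of reproducing kernels, and checks by direct differentiation of $B(S)(z)=\ip{Sk_z}{k_z}$ that the same expression comes out (equation~\eqref{eq:Laplacian of B(S)}). Your primary argument instead mollifies with the heat semigroup: the identity $\varphi_t\ast B(S)=S\ast(\varphi_t\ast\Phi)$, the scalar heat equation, differentiation in trace norm of $t\mapsto\varphi_t\ast\Phi$, and a matching of distributional and uniform limits as $t\to0^+$. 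That approach is cleaner conceptually and avoids writing out the explicit formula, at the price of having to justify the mixed-type associativity $\varphi_t\ast(\Phi\ast S)=(\varphi_t\ast\Phi)\ast S$ for merely bounded $S$ (which is not one of the verbatim QHA identities quoted for $A,B\in\traceop$, $a\in L^\infty$ in the preliminaries, since here $\varphi_t\in L^1$ and $S$ need not be trace class — it does hold and can be checked from the definitions, but you were right to flag it). Your "alternative" argument at the end — differentiating $\Tr(SL_z\Phi)$ under the trace and pushing the Laplacian through the covariance $L_{z_0}\circ L_{w}=L_{z_0+w}$ — is essentially the paper's computation repackaged. One small point: the paper never verifies the first claim ($B(S)\in\domfp\cap\domf$) separately; it simply computes $\Delta B(S)$ explicitly and then bounds $\|S\ast\Delta\Phi\|_\infty$ and $\|S\ast\Delta\Phi\|_p$ via the QHA Young inequalities, exactly as you do in your opening paragraph.
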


\begin{proof}
    The first claim follows from a direct computation.
    For the second part, recall that
    $$\Delta \Phi= \pi\sum_{|\vm|=1}(E_\vm-\Phi). $$
    Hence
    \begin{align*}
        (S\ast \Delta \Phi)(z)&= \pi \sum_{|\vm|=1} \Tr(S\actop{z}{E_\vm-\Phi})\\
        &=\pi \sum_{|\vm|=1}\big( \Tr(SW_ze_\vm\otimes W_ze_\vm)-\Tr(SW_z\one\otimes W_z\one)\big)\\
        &= \pi \sum_{|\vm|=1}\big( \ip{SW_ze_\vm}{W_ze_\vm}-\ip{SW_z\one}{W_z\one}\big)
    \end{align*}
    Note that if $\vm\in \N_0^n$ is s.t. $\vm$ has a nonzero entry, which is a $1$, only on its $i^{th}$ position. Then $|\vm|=1$ and for $w\in \C^n$, 
    $$(W_ze_\vm)(w)=\sqrt{\pi}k_z(w)(w_i-z_i).$$
    Also $W_z\one=k_z$. Therefore
    \begin{align*}
        (S\ast \Delta \Phi)(z)&= \pi e^{-\pi|z|^2} \sum_{i=1}^n\Big(\pi\ip{S\big((w_i-z_i)K_z\big)}{(w_i-z_i)K_z}-\ip{SK_z}{K_z}\Big).
    \end{align*}
    Then we verify the following by computation:
    \begin{equation}\label{eq:Laplacian of B(S)}
        \Delta B(S)(z)= \pi e^{-\pi|z|^2} \sum_{i=1}^n\Big(\pi\ip{S\big((w_i-z_i)K_z\big)}{(w_i-z_i)K_z}-\ip{SK_z}{K_z}\Big),
    \end{equation}
    Proving $\Delta B(S)=S\ast \Delta \Phi$.
    Lastly, note that by QHA Young's inequality,
    $$\|S\ast \Delta \Phi\|_\infty\leq \|S\|\|\Delta \Phi\|_1$$
    and
    $$\|S\ast \Delta \Phi\|_p\leq C_\pi^{\frac{1}{p}}\|S\|_p\|\Delta \Phi\|_1$$
    
    and hence $B(S)\in \domf$.
\end{proof}

Given below is an equivalent condition for $S\in \domop$. 
\begin{proposition}\label{prop:dom equivalence}
    Let $S \in \bdd$. Then $S\in \domop$ iff $\Delta (\varphi\ast S)=\varphi\ast T$ for some $T\in \bdd$. Moreover, when the statement is true, $T=\Delta S$.
\end{proposition}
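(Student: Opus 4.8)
The plan is to reduce both implications to the identity $\varphi\ast S=T_{B(S)}$ combined with the fact, recorded in Proposition \ref{prop:Laplacian of B(S)}, that $B(S)\in\domf$ for \emph{every} $S\in\bdd$; this makes $\varphi\ast S$ a Toeplitz operator whose symbol lies in the domain of the classical Laplacian, so that its operator Laplacian can be computed symbolwise.

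The key preliminary observation is that $\Delta(\varphi\ast S)$ exists for every $S\in\bdd$ and equals $T_{\Delta B(S)}$, whether or not $S\in\domop$. Indeed, $\varphi\ast S=(\Phi\ast\Phi)\ast S=(\Phi\ast S)\ast\Phi=T_{B(S)}$, and $B(S)\in\domf$ by Proposition \ref{prop:Laplacian of B(S)}, so the clause $\Delta T_a=T_{\Delta a}$ for $a\in\domf$ in Proposition \ref{prop:Toeplitz in dom} gives $\Delta(\varphi\ast S)=\Delta T_{B(S)}=T_{\Delta B(S)}$. Likewise $\varphi\ast T=T_{B(T)}$ for any $T\in\bdd$. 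Hence the condition ``$\Delta(\varphi\ast S)=\varphi\ast T$'' is just the operator identity $T_{\Delta B(S)}=T_{B(T)}$.

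For the forward implication, assume $S\in\domop$ with $\Delta S=T$, i.e.\ $\Delta B(S)=B(T)$. Then $T_{\Delta B(S)}=T_{B(T)}$, so $\Delta(\varphi\ast S)=T_{\Delta B(S)}=T_{B(T)}=\varphi\ast T$ by the previous paragraph (alternatively, apply Proposition \ref{prop:domain lemma}(4) with $\psi=\varphi$). For the reverse implication, assume $\Delta(\varphi\ast S)=\varphi\ast T$ for some $T\in\bdd$; by the reduction above this means $T_{\Delta B(S)}=T_{B(T)}$. Applying the Berezin transform and using $B(T_a)=B(a)=\varphi\ast a$ yields $\varphi\ast\Delta B(S)=\varphi\ast B(T)$. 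Both $\Delta B(S)$ (which is bounded, by Proposition \ref{prop:Laplacian of B(S)}) and $B(T)$ belong to $L^\infty(\C^n)$, and convolution with the Gaussian $\varphi$ is injective on $L^\infty(\C^n)$ because the Fourier transform of $\varphi$ is nowhere vanishing; therefore $\Delta B(S)=B(T)$. This is precisely the assertion that $S\in\domop$ and $\Delta S=T$, which in particular establishes the ``moreover'' clause.

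The computation is short once the convolution calculus of Sections \ref{sec:prelim} and \ref{sec:Laplacian} is available; I expect the only delicate point to be the final injectivity step — one needs that $a\mapsto\varphi\ast a=B(a)$ annihilates no nonzero bounded function (equivalently, injectivity of the Berezin transform on $L^\infty(\C^n)$) — together with the bookkeeping of which convolution identities apply (function--operator versus operator--operator, and the trace-class hypotheses involved) when rewriting $\varphi\ast S$ and $\varphi\ast T$ as Toeplitz operators.
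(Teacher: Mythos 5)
Your proof is correct and follows essentially the same route as the paper: rewrite $\varphi\ast S$ using $\varphi=\Phi\ast\Phi$ and the associativity/identities of the QHA calculus so that the hypothesis becomes an equation between $\Delta B(S)$ and $B(T)$, with Proposition~\ref{prop:Laplacian of B(S)} supplying the crucial fact that $B(S)\in\domf$ for any $S\in\bdd$ (and hence $\varphi\ast S=T_{B(S)}\in\domop$ automatically). The only cosmetic difference is in the final injectivity step: the paper cancels $\cdot\ast\Phi$ directly (injectivity of $a\mapsto T_a$), whereas you first apply the Berezin transform to $T_{\Delta B(S)}=T_{B(T)}$ and then invoke injectivity of $a\mapsto\varphi\ast a$ on $L^\infty$ — a mildly redundant detour, since injectivity of the Toeplitz quantization already finishes the argument; your Fourier-analytic justification of the Gaussian-convolution injectivity is fine but is a distributional argument the paper avoids by staying inside the QHA framework.
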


\begin{proof}
    Recall that $\varphi=\Phi\ast \Phi$ and the associativity of the convolution of three operators. Thus we have
    \begin{align*}
        \Delta(\varphi\ast S)&=\Delta ((\Phi\ast \Phi)\ast S)\\
        &= \Delta ((S\ast \Phi)\ast \Phi)\\
        &= \Delta(S\ast \Phi)\ast \Phi
    \end{align*}
    where the last line follows from statement (5) in Proposition \ref{prop:domain lemma} as $S\ast \Phi\in \domf$ by Proposition \ref{prop:Laplacian of B(S)}.
    Also for $T\in \bdd$, 
    $$\varphi\ast T= (\Phi\ast \Phi)\ast T=(T\ast \Phi)\ast \Phi.$$
    Hence 
    \begin{align*}
        &\Delta(\varphi\ast S)=\varphi\ast T\\ 
        &\iff  \Delta(S\ast \Phi)\ast \Phi= (T\ast \Phi)\ast \Phi\\
        &\iff \Delta(S\ast \Phi)=T\ast \Phi
    \end{align*}
    where the last line follows from the injectivity of the map $a\mapsto a\ast \Phi$.
\end{proof}

Next we discuss another characterization of the Laplacian.
\begin{proposition}\label{prop:Berezi Laplacian}
    Let $1\leq p \leq \infty$. Then
    \begin{enumerate}
        \item If there is $T\in \cS^p(\cF^2)$ s.t.
        $$\lim_{t\to 0^+}\frac{\varphi_t\ast S-S}{t}=\frac{1}{\pi}T$$
        in $\cS^p(\cF^2)$ then $S\in \domopp$ and $\Delta S=T$.
        \item If $p<\infty$ and $S\in \domopp$ then 
        $$\lim_{t\to 0^+}\frac{\varphi_t\ast S-S}{t}=\frac{1}{\pi}\Delta S$$
        w.r.t. the Schatten-$p$ norm $\|\cdot\|_\infty$.
        \item If $p=\infty$ and $\Delta S\in \cbuop$ then 
        $$\lim_{t\to 0^+}\frac{\varphi_t\ast S-S}{t}=\frac{1}{\pi}\Delta S$$
        w.r.t. the operator norm $\|\cdot\|$.
    \end{enumerate}
\end{proposition}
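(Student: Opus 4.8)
The plan is to run everything through the heat semigroup $s\mapsto\varphi_s\ast S$. The first step is an operator-level heat equation: for $S\in\domop$ and $s>0$ the map $s\mapsto\varphi_s\ast S$ is differentiable (in operator norm, and in $\|\cdot\|_p$ when $\Delta S\in\cS^p(\cF^2)$) with
\[
  \odv{}{s}(\varphi_s\ast S)=\tfrac1\pi(\Delta\varphi_s)\ast S=\tfrac1\pi\,\varphi_s\ast\Delta S .
\]
To see this I would write the difference quotient as $\big(\tfrac1h(\varphi_{s+h}-\varphi_s)\big)\ast S$, use that $\tfrac1h(\varphi_{s+h}-\varphi_s)\to\partial_s\varphi_s=\tfrac1\pi\Delta\varphi_s$ in $L^1(\C^n,dz)$ for $s>0$ (an explicit Gaussian estimate), pass the limit through the convolution via the QHA Young inequalities, and identify $(\Delta\varphi_s)\ast S=\Delta(\varphi_s\ast S)=\varphi_s\ast\Delta S$ using Proposition~\ref{prop:domain lemma}(3)--(4). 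Since $\sigma\mapsto\varphi_\sigma\ast\Delta S$ is continuous on $(0,\infty)$, the fundamental theorem of calculus then yields, for $0<s<t$,
\[
  \varphi_t\ast S-\varphi_s\ast S=\tfrac1\pi\int_s^t\varphi_\sigma\ast\Delta S\,d\sigma .
\]

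For (1) I cannot use the last display, since $S\in\domop$ is part of what is to be proved; instead I apply the Berezin transform. Using $B(\varphi_t\ast S)=\varphi_t\ast B(S)$ (a consequence of $\varphi\ast R=T_{B(R)}$, $\psi\ast T_a=T_{\psi\ast a}$, associativity of the function--function--operator convolution, and injectivity of $a\mapsto T_a$) together with the boundedness of $B\colon\cS^p(\cF^2)\to L^p(\C^n,dz)$, the hypothesis of (1) gives $\tfrac1t(\varphi_t\ast B(S)-B(S))\to\tfrac1\pi B(T)$ in $L^p$. On the other hand, by Proposition~\ref{prop:Laplacian of B(S)} we have $B(S)\in\domf$ and $\Delta B(S)=S\ast\Delta\Phi\in\cbuf$; since $\varphi_\sigma\ast B(S)$ solves the heat equation and both $B(S)$ and $\Delta B(S)$ lie in $\cbuf$, the function-level fundamental theorem of calculus gives $\varphi_t\ast B(S)-B(S)=\tfrac1\pi\int_0^t\varphi_\sigma\ast\Delta B(S)\,d\sigma$, hence $\tfrac1t(\varphi_t\ast B(S)-B(S))\to\tfrac1\pi\Delta B(S)$ uniformly. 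A uniform limit and an $L^p$-limit of the same family of continuous functions on $\C^n$ must agree, so $\Delta B(S)=B(T)$; as $T\in\cS^p(\cF^2)$, this says precisely $S\in\domopp$ with $\Delta S=T$.

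For (2) and (3) I start from the operator identity above and let $s\to0^+$. The integrand $\sigma\mapsto\varphi_\sigma\ast\Delta S$ extends continuously to $\sigma=0$ with value $\Delta S$ --- in the Schatten-$p$ norm when $p<\infty$, because $\Delta S\in\cS^p(\cF^2)$ and $\{\varphi_\sigma\}$ is an approximate identity; and in the operator norm when $\Delta S\in\cbuop$. Hence $\int_s^t\varphi_\sigma\ast\Delta S\,d\sigma$ converges in that norm as $s\to0^+$, and therefore $\varphi_s\ast S=\varphi_t\ast S-\tfrac1\pi\int_s^t\varphi_\sigma\ast\Delta S\,d\sigma$ converges in that norm; its limit must be $S$, since $\langle(\varphi_s\ast S)f,g\rangle=\int\langle(\actop{z}{S})f,g\rangle\varphi_s(z)\,dz\to\langle Sf,g\rangle$ by SOT-continuity of $z\mapsto\actop{z}{S}$, so $\varphi_s\ast S\to S$ weakly. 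Thus $\varphi_t\ast S-S=\tfrac1\pi\int_0^t\varphi_\sigma\ast\Delta S\,d\sigma$ (an identity in $\cS^p(\cF^2)$ in case (2), in $\bdd$ in case (3)); dividing by $t$ and letting $t\to0^+$ gives the limit $\tfrac1\pi\Delta S$ in the respective norm, by continuity of the integrand at $\sigma=0$.

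The main obstacle I expect is bookkeeping rather than conceptual: proving the operator-valued heat equation carefully (the $L^1$-differentiability of $s\mapsto\varphi_s$ and the interchange of the limit with the convolution), and, in (2), reconciling the fact that $S$ itself need not be Schatten-$p$ with the assertion that $\varphi_t\ast S-S$ is --- which is exactly why one isolates the non-Schatten part in the $s\to0^+$ boundary term, controlled by the integral of a $\|\cdot\|_p$-continuous integrand together with the soft weak-operator identification of its limit.
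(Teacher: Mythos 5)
Your proof is correct and follows the same overall strategy as the paper --- a heat equation plus the fundamental theorem of calculus, arriving at the key identity
\[
\varphi_t\ast S-S=\frac{1}{\pi}\int_0^t\varphi_\sigma\ast\Delta S\,d\sigma .
\]
The one genuine difference is \emph{where} you run the FTC. You integrate at the operator level: you first prove differentiability of $s\mapsto\varphi_s\ast S$ on $(0,\infty)$, apply FTC on $[s,t]$, and then let $s\to 0^+$, identifying the boundary term $\varphi_s\ast S\to S$ via a weak-convergence argument. The paper instead integrates at the function level $s\mapsto\varphi_s\ast B(S)$, where there is no boundary issue at all because $B(S)\in\cbuf$ (so $\varphi_s\ast B(S)\to B(S)$ uniformly), and then lifts the resulting function identity to operators via injectivity of the Berezin transform. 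Your route buys a self-contained operator-side heat semigroup formulation (essentially anticipating Lemma~4.1 and Proposition~4.2); the paper's route is a bit shorter precisely because passing through $B(S)$ makes the $s\to 0^+$ endpoint trivial. For part (1) the two arguments are also equivalent: you convert the hypothesis to an $L^p$ limit via the boundedness of $B\colon\cS^p\to L^p$ and match it against the classical uniform limit $\tfrac1\pi\Delta B(S)$, while the paper commutes $\lim_{t\to 0^+}$ with $\cdot\ast\Phi$ directly via QHA Young; both yield $\Delta B(S)=B(T)$.

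One small caution: make sure the weak-convergence step for the boundary term does not implicitly rely on $\domop\subset\cbuop$, since in the paper that inclusion (Proposition~4.3) is proved \emph{using} the integral identity you are in the middle of establishing. You avoid the circularity correctly by invoking only SOT-continuity of $z\mapsto L_z S$ plus dominated convergence, which holds for arbitrary $S\in\bdd$; it is worth stating that explicitly so the reader sees the argument is not circular.
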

\begin{proof}
    To prove (1), note that $B(S)\in \domf\cap \cbuf$ by Proposition \ref{prop:Laplacian of B(S)}. Now by the classical theory we have
    \begin{align*}
        \Delta B(S)&= \pi \lim_{t\to 0^+}\frac{\varphi_t\ast B(S)-B(S)}{t}\\
        &= \pi \lim_{t\to 0^+} \Big( \frac{\varphi_t\ast S-S}{t} \Big)\ast \Phi\\
        &= T \ast \Phi
    \end{align*}
    by the QHA Young's inequality and the assumption. In addition, $$\|T \ast \Phi\|\leq C_\pi^{\frac{1}{p}}\|T\|_p\|\Phi\|_1.$$ Thus $S\in\domopp$, proving (1).
    
    The proofs of (2) and (3) are similar. We prove (3). Since $B(S)\in \domf$ by the heat equation and the statement (5) of \ref{prop:domain lemma}
    \begin{align*}
        \odv{}{t}(\varphi_t \ast B(S))
        &= \frac{1}{\pi}\Delta (\varphi_t\ast B(S))\\
        &= \frac{1}{\pi}\varphi_t\ast \Delta B(S)\\
        &= \frac{1}{\pi}\varphi_t\ast B(\Delta S).
    \end{align*}
    Hence by the fundamental theorem of calculus
    $$\varphi_t\ast B(S)-B(S)=\frac{1}{\pi}\int_0^t \varphi_s\ast B(\Delta S) \ ds,$$
    i.e.,
    $$(\varphi_t\ast S-S)\ast \Phi=\Bigg(\frac{1}{\pi} \int_0^t \varphi_s\ast\Delta S\ ds \Bigg)\ast \Phi$$
    Now since the Berezin transform is injective
    \begin{equation}\label{eq:heat integral}
        \varphi_t\ast S-S=\frac{1}{\pi}\int_0^t \varphi_s\ast\Delta S\ ds.
    \end{equation}
    Then we have
    \begin{align*}
        \Big\|\frac{\varphi_t\ast S-S}{t} -\Delta S\Big\| \leq \frac{1}{\pi t} \int_0^t \|\varphi_s\ast\Delta S-\frac{1}{\pi}\Delta S\| \ ds. 
    \end{align*}
    Then RHS tends to $0$ as $t\to 0^+$ because $\Delta S\in \cbuop$ and $\varphi_t\ast\Delta S\to S$ in operator norm as $t\to 0^+$. This proves 
    $$\lim_{t\to 0^+}\frac{\varphi_t\ast S-S}{t}=\frac{1}{\pi}\Delta S.$$   
    The proof of (2) is similar and uses the convergence $\varphi_t\ast \Delta S\to \Delta S$ in Schatten-$p$ norm.
\end{proof}

Lastly, we investigate the behaviour of the Laplacian with operator-operator convolution.

\begin{proposition}
    Let $1\leq p\leq \infty$, $S\in \cS^p(\cF^2)$ and $A\in \traceop$.Then
    \begin{enumerate}
        \item If $A\in \domoptrace$ then $S\ast A\in \domfp$ and  $\Delta (S\ast A)=S\ast \Delta A$.
        \item If $p<\infty$ and $S\in \domopp$ then $S\ast A \in \domf$ and  $\Delta (S\ast A)=(\Delta S)\ast  A$.
        \item If $S\in \domop$ and $\Delta S\in \cbuop$ then $S\ast A \in \domf$ and  $\Delta (S\ast A)=(\Delta S)\ast  A$.
    \end{enumerate}
\end{proposition}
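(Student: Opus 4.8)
The plan is to reduce each of the three statements to an associativity identity for convolutions together with the already-established fact that the relevant convolution lands in a Schatten class (so that the Laplacian can be passed through it). The guiding principle is the same one used throughout Section~\ref{sec:Laplacian}: write $B(S\ast A)=(S\ast A)\ast\Phi$, reassociate so that the factor already known to lie in a domain of the Laplacian absorbs the $\Phi$, and then invoke the appropriate part of Proposition~\ref{prop:domain lemma} (statements (5) and (6), in the operator setting) to move $\Delta$ inside.

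For statement (1), since $A\in\domoptrace$ we have $\Delta A\in\traceop$, and by the identity $a\ast(A\ast B)=(a\ast A)\ast B$ for trace-class $A,B$ (recorded at the end of the QHA subsection) together with Proposition~\ref{prop:Laplacian of B(S)}, I would compute
\[
\Delta\big(B(S\ast A)\big)=\Delta\big((S\ast A)\ast\Phi\big)=\Delta\big(S\ast(A\ast\Phi)\big)=\Delta\big(S\ast B(A)\big).
\]
Here $A\ast\Phi=B(A)\in\domfp$ by Proposition~\ref{prop:Laplacian of B(S)} (in fact $\in\domfone$ since $A$ is trace-class), so statement (6) of Proposition~\ref{prop:domain lemma} applies and gives $\Delta(S\ast B(A))=S\ast\Delta B(A)=S\ast(\Delta A\ast\Phi)=(S\ast\Delta A)\ast\Phi=B(S\ast\Delta A)$; injectivity of the Berezin transform then yields $\Delta(S\ast A)=S\ast\Delta A$, and the QHA Young inequality for $\cS^p$-valued operator-operator convolution bounds $\|S\ast\Delta A\|_p$, so $S\ast A\in\domfp$.

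For statements (2) and (3), the roles of $S$ and $A$ are swapped: now $S\in\domopp$ (resp.\ $\domop$ with $\Delta S\in\cbuop$) is the factor that knows the Laplacian. Using commutativity $S\ast A=A\ast S$ and the reassociation $A\ast(S\ast\Phi)=(A\ast S)\ast\Phi$ (valid since $A$ and $\Phi$ are both trace-class), I would write $B(S\ast A)=(A\ast S)\ast\Phi=A\ast(S\ast\Phi)=A\ast B(S)$, then apply statement (6) of Proposition~\ref{prop:domain lemma} — which needs $S\ast\Phi=B(S)\in\domf$, exactly what Proposition~\ref{prop:Laplacian of B(S)} provides — to get $\Delta(A\ast B(S))=A\ast\Delta B(S)=A\ast(\Delta S\ast\Phi)=(A\ast\Delta S)\ast\Phi$, and conclude $\Delta(S\ast A)=(\Delta S)\ast A$ after peeling off $\Phi$. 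The Schatten/operator-norm membership $S\ast A\in\domf$ again follows from the QHA Young inequalities of the operator-operator convolution applied to $(\Delta S)\ast A$.

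The main obstacle I anticipate is purely bookkeeping: making sure that every reassociation $(X\ast Y)\ast Z=X\ast(Y\ast Z)$ invoked above falls under a regime where associativity was actually asserted in the preliminaries — the operator-operator-function and operator-operator-operator associativity statements each carry a trace-class hypothesis on one of the outer factors, and one must check these are met (they are, since $\Phi$ and $A$ are trace-class). The case split between $p<\infty$ and $p=\infty$ in (2) versus (3) is dictated solely by which mode of convergence is available when the Laplacian is realized as the $t\to 0^+$ limit of $(\varphi_t\ast\cdot-\cdot)/t$ (Proposition~\ref{prop:Berezi Laplacian}); in the $p=\infty$ case one needs $\Delta S\in\cbuop$ precisely so that $\varphi_t\ast\Delta S\to\Delta S$ in operator norm, which is why that hypothesis is imposed. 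Beyond that, the argument is a direct transcription of the proof of Proposition~\ref{prop:domain lemma}(4) and requires no new ideas.
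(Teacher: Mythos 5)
Your proposed argument has a genuine gap, and it stems from conflating two different maps. For a \emph{function} $g$, the Berezin transform is $B(g)=\varphi\ast g$, while $g\ast\Phi=T_g$ is its Toeplitz quantization; these are not the same object (one is a function, the other an operator). Your first equality $B(S\ast A)=(S\ast A)\ast\Phi$ is therefore a type error: $S\ast A$ is already a function of $z$, and what you are actually computing is $\Delta T_{S\ast A}$, the \emph{operator} Laplacian of the Toeplitz operator with symbol $S\ast A$. Even if every reassociation afterward goes through, the conclusion you reach is (at best) $T_{S\ast A}\in\domop$ with $\Delta T_{S\ast A}=T_{S\ast\Delta A}$, which unwinds to the statement $\Delta\big(\varphi\ast(S\ast A)\big)=\varphi\ast(S\ast\Delta A)$. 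That does \emph{not} imply $S\ast A\in\domfp$; the map $g\mapsto\varphi\ast g$ is injective but it is also smoothing, so regularity of $\varphi\ast g$ says nothing about regularity of $g$. Your appeal to "injectivity of the Berezin transform" to deduce $\Delta(S\ast A)=S\ast\Delta A$ presupposes that $\Delta(S\ast A)$ already exists, which is exactly the point at issue. (A secondary problem: the parts of Proposition~\ref{prop:domain lemma} you cite, statements (5) and (6), place the Laplacian hypothesis on different factors than the ones you have in hand — e.g.\ (6) requires the \emph{operator} in the function-operator convolution to lie in $\domoptrace$, whereas in your use the operator $S$ is a generic element of $\cS^p$; the statement you would actually need is (3) or (5), with the function $B(A)$ or $B(S)$ as the $\domfp$ factor.)

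The paper avoids all of this by working with the function $S\ast A$ directly, via the difference-quotient characterization (Proposition~\ref{prop:Berezi Laplacian}). One writes
\[
\frac{\varphi_t\ast(S\ast A)-S\ast A}{t}=S\ast\Big(\frac{\varphi_t\ast A-A}{t}\Big),
\]
notes that the inner quotient converges to $\tfrac{1}{\pi}\Delta A$ in trace norm as $t\to 0^+$, and then lets the QHA Young inequality for the operator-operator convolution carry this convergence through to conclude $\Delta(S\ast A)=S\ast\Delta A$ with the correct $L^p$ bound. The same template, with $S$ and $A$ swapped, handles (2) and (3), and the case split you anticipated ($p<\infty$ vs.\ $p=\infty$, needing $\Delta S\in\cbuop$) is governed by which convergence of $\tfrac{\varphi_t\ast S - S}{t}$ is available. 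If you want to salvage your approach, you would have to produce the classical Laplacian of $S\ast A$ pointwise rather than after smoothing by $\varphi$; the difference-quotient route does exactly that.
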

\begin{proof}
    Proof of (1): 
    Note that 
    \begin{align*}
        \frac{\varphi_t\ast (S\ast A)-S\ast A}{t}&= S\ast \Big( \frac{\varphi_t\ast A-A}{t} \Big)
    \end{align*}
    Now since $$\lim_{t\to 0^+}\frac{\varphi_t\ast A-A}{t}=\frac{1}{\pi}\Delta A$$
    w.r.t. the trace norm, we have
    $$\lim_{t\to 0^+}\frac{\varphi_t\ast (S\ast A)-S\ast A}{t}= \frac{1}{\pi}(S\ast \Delta A)$$
    by the QHA Young's inequality.
    Therefore $S\ast A\in \domf$ with $\Delta (S\ast A)=S\ast \Delta A$.
    The proofs of (2) and (3) are similar.
\end{proof}

\section{Density of \texorpdfstring{$\domop$}{the domain} in \texorpdfstring{$\toepalg$}{the Toeplitz algebra}}\label{sec:density}
Here we show that $\domop$ is a dense subset of the Toeplitz algebra $\toepalg$.
In \cite{F19}, Fulsche discusses correspondence theory for Fock spaces and gives several characterizations of the Toeplitz algebra $\toepalg$:
\begin{align*}
    \toepalg&=\cbuop\\
    &=\{S\in \bdd \mid \varphi_t\ast S\to S \text{ in operator norm}\}\\
    &=\overline{\{T_a\mid a\in \cbuf\}}.
\end{align*}

We use the above characterizations of $\toepalg$ in our analysis. First, we examine the following operator version of the heat equation which is then used to prove the inclusion of $\domop$ in $\cbuop$.

\subsection{The heat equation}
If $\{T_t\}_{t>0}$ is a parametrized family of operators in $\bdd$, we define  the derivative of $T_t$ at $t_0\in (0,1]$ to be 
$$\odv{}{t} T_t =\lim_{t\to t_0} \frac{T_t-T_{t_0}}{t-t_0}$$
w.r.t. operator norm if it exists. When $\odv{}{t} T_t$ exists we also have the following:
$$\ip{\odv{}{t} T_t f_1}{f_2}=\odv{}{t}\ip{T_tf_1}{f_2}$$
for $f_1,f_2\in \F$.

\begin{lemma}\label{lem:derivative}
    Let $t\in (0,1]$ and define $T_t=\varphi_t\ast S$. Then $$\odv{}{t}T_t=\Big(\frac{\partial}{\partial t} \varphi_t\Big)\ast S$$
\end{lemma}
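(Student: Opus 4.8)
The plan is to reduce the operator-valued statement to a scalar statement via the Berezin transform, which is injective, and then invoke the classical fact that $\varphi_t \ast a$ solves the heat equation together with the fact that $\partial_t \varphi_t$ is itself an $L^1$-function. First I would observe that, since the Berezin transform $S \mapsto B(S) = \Phi \ast S$ is injective and $B(\varphi_t \ast S) = \varphi_t \ast B(S)$ by the commutation of convolution with the operators $L_z$ (equivalently, by associativity $(\varphi_t \ast S)\ast\Phi = \varphi_t \ast (S \ast \Phi)$), it suffices to prove the corresponding scalar identity $\odv{}{t}(\varphi_t \ast B(S)) = (\partial_t \varphi_t) \ast B(S)$ with the derivative taken in the sup norm, and then to check that $B\bigl((\partial_t\varphi_t)\ast S\bigr) = (\partial_t\varphi_t)\ast B(S)$, which again is just $(\partial_t\varphi_t)\ast(S\ast\Phi) = ((\partial_t\varphi_t)\ast S)\ast\Phi$. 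Both reductions use only the associativity/commutativity properties of QHA convolutions listed in the preliminaries, applied with the $L^1$-kernels $\varphi_t$ and $\partial_t\varphi_t$.

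The analytic core is then the claim that for $a \in \bddf$ (here $a = B(S) \in \cbuf$), the map $t \mapsto \varphi_t \ast a$ is norm-differentiable on $(0,1]$ with derivative $(\partial_t\varphi_t)\ast a$. For this I would first record that $\partial_t \varphi_t(z) = \varphi_t(z)\bigl(\tfrac{\pi|z|^2}{t^2} - \tfrac{n}{t}\bigr)$, which for each fixed $t>0$ lies in $L^1(\C^n, dz)$ with an $L^1$-norm locally bounded in $t$; hence $(\partial_t\varphi_t)\ast a$ is a well-defined bounded (indeed uniformly continuous) function. Then for $t_0 \in (0,1]$ and $t$ near $t_0$, write the difference quotient
\[
\frac{\varphi_t \ast a - \varphi_{t_0}\ast a}{t - t_0} - (\partial_t\varphi_{t_0})\ast a = \Bigl(\frac{\varphi_t - \varphi_{t_0}}{t-t_0} - \partial_t\varphi_{t_0}\Bigr)\ast a,
\]
so by Young's inequality $\|\cdot\|_\infty \le \|\cdot\|_1 \|a\|_\infty$ it is enough to show $\bigl\|\tfrac{\varphi_t - \varphi_{t_0}}{t-t_0} - \partial_t\varphi_{t_0}\bigr\|_{L^1} \to 0$ as $t \to t_0$. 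This is a purely scalar statement: by the fundamental theorem of calculus the integrand equals $\tfrac{1}{t-t_0}\int_{t_0}^{t}\bigl(\partial_s\varphi_s(z) - \partial_{t_0}\varphi_{t_0}(z)\bigr)\,ds$, and one bounds its $L^1$-norm by $\sup_{s \text{ between } t_0,t}\|\partial_s\varphi_s - \partial_{t_0}\varphi_{t_0}\|_{L^1}$, which tends to $0$ by continuity of $s \mapsto \partial_s \varphi_s$ into $L^1(\C^n,dz)$ (explicit Gaussian estimates, uniform for $s$ in a compact subinterval of $(0,\infty)$, make this routine).

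The main obstacle is the last step — justifying $L^1$-continuity and differentiability of the family $\{\varphi_t\}$ at the level of the heat kernels themselves, i.e. controlling the Gaussian tails uniformly for $t$ in a neighborhood of $t_0$ away from $0$. Once that scalar estimate is in hand, everything else is transport through QHA Young's inequalities and injectivity of the Berezin transform. I would also remark that an alternative route avoids the difference-quotient estimate entirely: the semigroup identity $\varphi_t \ast \varphi_{t'} = \varphi_{t+t'}$ together with the classical heat equation $\Delta(\varphi_t\ast a) = \pi\,\partial_t(\varphi_t\ast a)$ and Proposition \ref{prop:domain lemma}(1) gives $\partial_t(\varphi_t \ast a) = \tfrac1\pi \Delta(\varphi_t\ast a) = \tfrac1\pi(\Delta\varphi_t)\ast a = (\partial_t\varphi_t)\ast a$ directly; however this presupposes the scalar heat equation holds in the norm-derivative sense, so one still ends up verifying essentially the same $L^1$-convergence, and I would present the direct estimate as the cleaner self-contained argument.
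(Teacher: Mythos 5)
Your central technical step --- showing $\bigl\|\tfrac{\varphi_t - \varphi_{t_0}}{t-t_0} - \partial_t\varphi_{t_0}\bigr\|_{L^1} \to 0$ as $t\to t_0$ --- is exactly the content of the paper's proof (the paper gets this via dominated convergence rather than your FTC-plus-continuity argument, but either works and they are essentially equivalent Gaussian estimates). Once this is in hand, the paper simply applies the QHA Young inequality $\|\psi\ast S\|\le\|\psi\|_1\|S\|$ to $\psi = \tfrac{\varphi_t - \varphi_{t_0}}{t-t_0} - \partial_t\varphi_{t_0}$ and is done.

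The gap is in your opening reduction. You claim that, because the Berezin transform $B$ is injective, it suffices to establish the \emph{scalar} identity $\odv{}{t}(\varphi_t\ast B(S)) = (\partial_t\varphi_t)\ast B(S)$ with the derivative taken in $\|\cdot\|_\infty$. But injectivity of $B$ only lets you \emph{identify} an operator once you already know an operator-norm limit exists; it does not allow you to upgrade sup-norm convergence of $B\bigl(\tfrac{T_t-T_{t_0}}{t-t_0}\bigr)$ to operator-norm convergence of $\tfrac{T_t-T_{t_0}}{t-t_0}$, since $B^{-1}$ is not bounded on its range. As written, your argument proves that the Berezin transforms of the difference quotients converge uniformly, which is strictly weaker than the statement of the lemma (operator-norm differentiability of $t\mapsto T_t$). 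The Berezin transform plays no role in the paper's proof and is not needed here: the inequality $\|\psi\ast S\|\le\|\psi\|_1\|S\|$ that you invoke at the function level applies verbatim at the operator level, so you should apply it to $\psi\ast S$ directly rather than to $\psi\ast B(S)$. Your closing ``alternative route'' via the semigroup property and the heat equation has the same issue --- it presupposes the operator-valued norm derivative whose existence is precisely what must be shown.
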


\begin{proof}
    Note that
    \begin{align*}
        \odv{}{t} T_t&= \lim_{s\to 0^+} \bigg( \Big(\frac{\varphi_t-\varphi_{t+s}}{s}\Big)\ast S\bigg)
    \end{align*}
    Also by the dominated convergence theorem
    $$\lim_{s\to 0^+}\frac{\varphi_t-\varphi_{t+s}}{s}= \frac{\partial}{\partial t} \varphi_t$$
    in $L^1(\C^n, dz)$.
    Now by the QHA Young's inequality, we have
    $$\odv{}{t} T_t=\Big(\frac{\partial}{\partial t} \varphi_t\Big)\ast S$$
    in operator norm, as required.
    
\end{proof}

\begin{proposition}\label{prop:heat equation}
    Let $S\in \bdd$ and $t\in(0,1]$. 
    Moreover, $\varphi_t\ast S$ satisfies the operator heat equation $\Delta T_t=\pi \odv{}{t} T_t$ for $t>0$.
\end{proposition}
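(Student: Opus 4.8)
The plan is to deduce the operator heat equation directly from the classical scalar heat equation for the Gaussian kernel, using that the operator Laplacian commutes with convolution by an $L^1$ function.

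First I would verify that $\varphi_t\in\domfone$ for each fixed $t>0$. Since $\varphi_t(z)=t^{-n}e^{-\pi|z|^2/t}$ is smooth and $\Delta\varphi_t$ is again of the form (Gaussian)$\,\times\,$(polynomial), the Laplacian $\Delta\varphi_t$ exists and is a Schwartz function, hence lies in $L^1(\C^n,dz)$. With this membership in hand, Proposition~\ref{prop:domain lemma}(3), applied with $\psi=\varphi_t$, gives both $T_t=\varphi_t\ast S\in\domop$ and the identity
\[
    \Delta T_t=(\Delta\varphi_t)\ast S .
\]

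Next I would invoke the classical heat equation recorded in the Preliminaries, $\Delta\varphi_t=\pi\,\frac{\partial}{\partial t}\varphi_t$ (a direct computation with the explicit Gaussian). Substituting and pulling the scalar $\pi$ out of the convolution in its function argument,
\[
    \Delta T_t=\Bigl(\pi\,\tfrac{\partial}{\partial t}\varphi_t\Bigr)\ast S=\pi\,\Bigl(\tfrac{\partial}{\partial t}\varphi_t\Bigr)\ast S .
\]
Finally, Lemma~\ref{lem:derivative} identifies $\bigl(\frac{\partial}{\partial t}\varphi_t\bigr)\ast S$ with $\odv{}{t}T_t$, the derivative being taken in operator norm; hence $\Delta T_t=\pi\,\odv{}{t}T_t$, as claimed.

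I do not expect a genuine obstacle here: the proposition is a formal combination of three facts already available --- the membership $\varphi_t\in\domfone$, the compatibility $\Delta(\psi\ast S)=(\Delta\psi)\ast S$ of Proposition~\ref{prop:domain lemma}(3), and Lemma~\ref{lem:derivative}. The only subtlety worth flagging is that the operator-norm $t$-derivative of $T_t$ must agree with the result of differentiating $\varphi_t$ in $L^1(\C^n,dz)$ and then convolving with $S$; but this is exactly the content of Lemma~\ref{lem:derivative}, whose proof rests on dominated convergence in $L^1$ together with the QHA Young inequality, so nothing further is needed.
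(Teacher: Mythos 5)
Your proof is correct and follows essentially the same route as the paper: verify $\varphi_t\in\domfone$, invoke Proposition~\ref{prop:domain lemma} to get $\Delta(\varphi_t\ast S)=(\Delta\varphi_t)\ast S$, substitute the scalar heat equation $\Delta\varphi_t=\pi\,\partial_t\varphi_t$, and finish with Lemma~\ref{lem:derivative}. (In fact you cite item~(3) of Proposition~\ref{prop:domain lemma}, the correct clause for a function--operator convolution, whereas the paper cites item~(1); yours is the more accurate reference.)
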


\begin{proof}
    Since $$\Delta \varphi_t (z)=\pi \frac{\partial}{\partial t} \varphi_t (z)=\Big(\frac{-n}{t^{n+1}}+\frac{\pi|z|^2}{t^{n+1}}\Big)\varphi_t(z),$$ 
    we have $\varphi_t\in \domfone$ and hence the claim follows from statement (i) in Proposition \ref{prop:domain lemma}. The second claim follows from Lemma \ref{lem:derivative} as $\varphi_t\ast S\in \domop$:
    $$\Delta(\varphi_t\ast S)=(\Delta \varphi_t)\ast S=\pi\Big(\frac{\partial}{\partial t} \varphi_t\Big) \ast S.$$
\end{proof}

\subsection{Density of \texorpdfstring{$\domop$}{the domain} in \texorpdfstring{$\toepalg$}{the Toeplitz algebra}}

\begin{proposition}\label{prop:inclusion of domain}
     We have the inclusion:
     $$\domop\subset \cbuop.$$
\end{proposition}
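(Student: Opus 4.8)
The plan is to prove the stronger statement that $\varphi_t\ast S\to S$ in operator norm as $t\to0^+$ whenever $S\in\domop$; by Fulsche's characterization $\cbuop=\{S\in\bdd\mid\varphi_t\ast S\to S\text{ in operator norm}\}$ this gives $S\in\cbuop$ at once. The conceptual point is that for a generic $S\in\bdd$ the family $\varphi_t\ast S$ need not converge in norm, and membership in $\domop$ supplies exactly the regularity in the parameter $t$ that forces convergence.

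Fix $S\in\domop$ and put $T_t:=\varphi_t\ast S$ for $t\in(0,1]$. Combining the operator heat equation (Proposition \ref{prop:heat equation}) with Proposition \ref{prop:domain lemma}(4) --- which applies since $S\in\domop$ --- we obtain
$$\odv{}{t}T_t=\frac{1}{\pi}\,\Delta(\varphi_t\ast S)=\frac{1}{\pi}\,\varphi_t\ast\Delta S .$$
The key estimate is that this derivative is uniformly bounded: since $\|\varphi_t\|_1=1$ we have $\|\varphi_t\ast\Delta S\|\le\|\Delta S\|$ for every $t\in(0,1]$, and moreover $s\mapsto\varphi_s\ast\Delta S$ is norm-continuous on $(0,1]$ because $\|(\varphi_s-\varphi_{s'})\ast\Delta S\|\le\|\varphi_s-\varphi_{s'}\|_1\|\Delta S\|$. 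By the vector-valued fundamental theorem of calculus this gives, for $0<a\le t\le1$,
$$T_t-T_a=\frac{1}{\pi}\int_a^t\varphi_s\ast\Delta S\,ds,\qquad\|T_t-T_a\|\le\frac{t-a}{\pi}\,\|\Delta S\| .$$

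From the Cauchy estimate the limit $S_0:=\lim_{t\to0^+}\varphi_t\ast S$ exists in operator norm, and letting $a\to0^+$ yields $\|T_t-S_0\|\le\frac{t}{\pi}\|\Delta S\|$. To identify $S_0$ with $S$ I would use the weak definition of the convolution: for $f_1,f_2\in\F$ we have $\ip{T_tf_1}{f_2}=\int\varphi_t(z)\ip{(L_zS)f_1}{f_2}\,dz$, and the integrand is a bounded continuous function of $z$ whose value at $z=0$ is $\ip{Sf_1}{f_2}$, so the approximate-identity property forces $\ip{T_tf_1}{f_2}\to\ip{Sf_1}{f_2}$; thus $T_t\to S$ weakly, while $T_t\to S_0$ in norm, so $S=S_0$. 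Hence $\varphi_t\ast S\to S$ in operator norm and $S\in\cbuop$.

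The step requiring genuine care is the vector-valued fundamental theorem of calculus, because here $\odv{}{t}T_t$ is defined only as an operator-norm limit. This reduces to the scalar case: for fixed $f_1,f_2\in\F$ the function $t\mapsto\ip{T_tf_1}{f_2}$ is continuously differentiable with derivative $\frac{1}{\pi}\ip{(\varphi_t\ast\Delta S)f_1}{f_2}$, so $\ip{(T_t-T_a)f_1}{f_2}=\frac{1}{\pi}\int_a^t\ip{(\varphi_s\ast\Delta S)f_1}{f_2}\,ds$; since $s\mapsto\varphi_s\ast\Delta S$ is norm-continuous and bounded on $[a,t]$, the Bochner integral $\frac{1}{\pi}\int_a^t\varphi_s\ast\Delta S\,ds$ exists in $\bdd$ with exactly these matrix entries, hence equals $T_t-T_a$. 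Equivalently, one may invoke the identity \eqref{eq:heat integral}, whose derivation uses only $S\in\domop$.
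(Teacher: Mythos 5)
Your proof is correct. Both you and the paper reduce the claim to showing that $\varphi_t\ast S\to S$ in operator norm via Fulsche's characterization of $\cbuop$, and both derive a norm-Lipschitz bound $\|\varphi_t\ast S-\varphi_a\ast S\|\le\frac{|t-a|}{\pi}\|\Delta S\|$ from membership in $\domop$. There are, however, two points where you take a genuinely different (and arguably more self-contained) route. First, to obtain the integral representation you combine the operator heat equation (Proposition~\ref{prop:heat equation}) directly with Proposition~\ref{prop:domain lemma}(4) and a careful scalar/Bochner fundamental theorem of calculus, whereas the paper passes through the Berezin transform: it derives the corresponding identity for the functions $\varphi_t\ast B(S)$ and then invokes injectivity of $T\mapsto T\ast\Phi$ to lift it back to operators (equation~\eqref{eq:heat integral}). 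Second, to identify the norm limit $S_0$ with $S$, the paper convolves both $\varphi_t\ast S$ and $S_0$ with $\varphi=\Phi\ast\Phi$, uses that $\varphi\ast S\in\cbuop$, and concludes from injectivity of $T\mapsto\varphi\ast T$; you instead observe the cheaper fact that $\varphi_t\ast S\to S$ in the weak operator topology --- the matrix entry $t\mapsto\ip{(\varphi_t\ast S)f_1}{f_2}=\int\varphi_t(z)\ip{(L_zS)f_1}{f_2}\,dz$ tends to $\ip{Sf_1}{f_2}$ because $z\mapsto\ip{(L_zS)f_1}{f_2}$ is bounded and continuous (SOT-continuity of translation) and $\varphi_t$ is an approximate identity. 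Your limit-identification is more elementary and avoids appealing to injectivity of the Berezin transform; the paper's is shorter once that injectivity is already in hand. Either way, both arguments are sound, and you correctly flag that the vector-valued FTC is the step demanding care.
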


\begin{proof}
    Let $S\in \domop$. It is enough to show that
     $\Tber{S} \to S$ norm as $t\to 0^+$.
     As seen in equation \ref{eq:heat integral} in the proof of Proposition \ref{prop:Berezi Laplacian}, we have
     $$\varphi_t\ast S=S+\frac{1}{\pi}\int_0^t \varphi_s\ast\Delta S\ ds.$$
    Now since $\|\varphi_s\ast\Delta S\|\leq \|\Delta S\|$, we have that $\varphi_t\ast S$ converges in operator norm to some operator $R\in \bdd$ as $t\to 0^+$.

It follows that $\conv{(\Tber{S})}$ also converges to $\conv{R}$ by the continuity of $S\mapsto \conv{S}$. Also $\conv{(\Tber{S})}=\Tber{(\conv{S})}\to \conv{S}$ as $\conv{S}\in \cbuop$.
 Hence $\conv{S}=\conv{R}$. Also the map $S\mapsto \conv{S}$ is injective because it is the composition of two injective maps: $\varphi\ast S=(S\ast \Phi)\ast \Phi$ and the Berezin transform and the map $a\mapsto T_a$ are injective.
Therefore $S=R$ as required.
\end{proof}

\begin{theorem}
    We have the following equality:
    $$\cbuop=\overline{\domop}.$$
\end{theorem}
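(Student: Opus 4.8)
The statement follows by combining the one-sided inclusion already established with the approximation-by-heat-smoothing available for $\cbuop$. The plan is as follows.

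First, I would note that the inclusion $\overline{\domop}\subset\cbuop$ is immediate: Proposition \ref{prop:inclusion of domain} gives $\domop\subset\cbuop$, and since $\cbuop$ is a $C^*$-algebra (hence norm-closed), taking closures preserves the inclusion. So the content is the reverse inclusion $\cbuop\subset\overline{\domop}$.

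For the reverse inclusion, I would invoke Fulsche's characterization recalled at the start of this section, namely $\cbuop=\{S\in\bdd\mid \varphi_t\ast S\to S \text{ in operator norm}\}$. Thus, given $S\in\cbuop$, the net $\varphi_t\ast S$ converges to $S$ in operator norm as $t\to 0^+$. It then suffices to observe that each approximant $\varphi_t\ast S$ lies in $\domop$: indeed, for $t\in(0,1]$ we have $\Delta\varphi_t(z)=\pi\frac{\partial}{\partial t}\varphi_t(z)=\big(\frac{-n}{t^{n+1}}+\frac{\pi|z|^2}{t^{n+1}}\big)\varphi_t(z)\in L^1(\C^n,dz)$, so $\varphi_t\in\domfone$, and statement (3) of Proposition \ref{prop:domain lemma} yields $\varphi_t\ast S\in\domop$ (this was also noted in Proposition \ref{prop:heat equation}). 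Hence $S$ is an operator-norm limit of elements of $\domop$, i.e. $S\in\overline{\domop}$.

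Putting the two inclusions together gives $\cbuop=\overline{\domop}$. I do not anticipate a real obstacle here — the substantive work was carried out in Proposition \ref{prop:inclusion of domain} (showing $\domop\subset\cbuop$ via the integral form of the operator heat equation and injectivity of $S\mapsto\varphi\ast S$) and in the cited characterizations of $\toepalg=\cbuop$ from \cite{F19}; the present theorem is essentially the bookkeeping that assembles these facts.
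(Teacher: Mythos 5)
Your proof is correct, but it takes a genuinely different route from the paper's. The paper proves $\cbuop\subset\overline{\domop}$ by observing that $\domop$ contains every Toeplitz operator $T_a$ with $a\in\bddf$ (Proposition \ref{prop:Toeplitz in dom}), and that this set of Toeplitz operators is dense in $\cbuop$ by Fulsche's correspondence theory. You instead use a different one of Fulsche's characterizations of $\cbuop$, namely that $S\in\cbuop$ iff $\varphi_t\ast S\to S$ in operator norm, and then note that each heat-smoothed approximant $\varphi_t\ast S$ lies in $\domop$ (because $\varphi_t\in\domfone$ and Proposition \ref{prop:domain lemma}(3) applies, or equivalently Proposition \ref{prop:heat equation}). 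Both arguments lean on the same external input (Fulsche's identification of $\toepalg$ with $\cbuop$ and its equivalent descriptions), but they select different dense subsets of $\cbuop$ as witnesses: the paper uses the Toeplitz operators themselves, while you use the heat semigroup regularizations $\{\varphi_t\ast S\}$. Your route has the mild advantage of being more self-contained within the machinery of Section \ref{sec:density} — it approximates an arbitrary $S\in\cbuop$ directly by smoothing, without passing through symbols — and it dovetails naturally with the operator heat equation viewpoint the paper has just set up. The paper's route, by contrast, emphasizes that the density is already witnessed by the Toeplitz operators, which is a cleaner statement to quote later. Both are valid; neither has a gap.
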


\begin{proof}
    The inclusion $\domop\subset \cbuop$ was already proved in Proposition \ref{prop:inclusion of domain}. Now the density of $\domop$ in $\cbuop$ follows immediately from the fact that $\domop$ contains the set of all Toeplitz operators with bounded symbols (Proposition \ref{prop:Toeplitz in dom}) which is a dense subset of $\cbuop$ by correspondence theory in \cite{F19}.
\end{proof}


\section{Gelfand theory of the radial Toeplitz algebra}\label{sec:Gelfand}

In this section, we revisit the Gelfand theory of the well-known radial Toeplitz algebra via the operator Laplacian. 

\subsection{Radial Toeplitz operators}
Let $U_n$ denote the group of $n\times n$ unitary matrices. A function $f$ on $\C^n$ is radial if $f(A^{-1}z)=f(z)$ for all $z\in \C^n, A\in U_n$. A radial function depends only on $|z|$ and thus we may write $f(z)=f(|z|)$ with a slight abuse of notation. Let $\pi$ be the representation of $U_n$ acting on $\F$ given by
$$(\pi(A)f)(z)=f(A^{-1}z),\ \ \ z\in \C^n, A\in U_n$$
An operator $S\in \bdd$ is said to be radial if $S$ intertwines with $\pi$. The algebra of all radial operators is denoted by $\rad$. It is known that a Toeplitz operator $T_a$ is radial iff its symbol $a$ is a radial function. Moreover, radial operators on $\F$ are diagonalizable and the eigenvalue sequence of a radial Toeplitz operator $T_a$ is given by the formula:
$$\gamma_{n,a}(m)=\frac{1}{(n-1+m)!}\int_0^\infty a(\sqrt{r})r^{m+n-1}e^{-r}\ dr.$$
Note that we have used a normalization of the Gaussian measure that is different from \cite{GV02} but the proofs are similar.
 The radial Toeplitz algebra $ \toepalgrad$ is the $C^*$-algebra generated by the set of all radial Toeplitz operators. It was shown in \cite{GV02} the $\toepalgrad$ is commutative. Esmeral and Maximenko discusses the Gelfand theory of  $\toepalgrad$ in \cite{EM16} and proved that it is isometrically isomorphic to the $C^*$-algebra $\cbu$ of all bounded uniformly continuous sequences on $\N_0$ that are uniformly continuous with respect to the square-root metric $\rho:\N_0\times \N_0\to \R_+$ given by
$$\rho(m_1,m_2)=|\sqrt{m_1}-\sqrt{m_2}|,\ \ \ m_1,m_2\in \N_0.$$
Here we provide a concise proof of this fact. We start by noting that radial operators in $\domop$ are dense in the radial Toeplitz algebra $\toepalgrad$. \\

\begin{proposition}\label{prop:radial correspondence}
    The set of all radial operators in the domain of the Laplacian $\domop\cap \rad$ is dense in $\toepalgrad$, i.e.,
    $$\toepalgrad=\overline{\domop\cap \rad}=\overline{\{T_a\mid a\in \bddf \text{ is radial }\}}$$
\end{proposition}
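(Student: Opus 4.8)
The plan is to prove the chain of equalities by combining the general density result just established (Theorem: $\cbuop = \overline{\domop}$, or rather the containment $\domop \cap \rad$ dense in radial $\cbuop$) with a compatibility between the radiality constraint and the averaging/convolution operations used earlier. First I would record the key structural fact: averaging against the compact group $U_n$ commutes with everything in sight. Concretely, for $S \in \bdd$ define the radial projection $\mathcal{R}(S) = \int_{U_n} \pi(A) S \pi(A)^* \, dA$ (Haar measure on $U_n$), which is a norm-contractive conditional expectation of $\bdd$ onto $\rad$. I would check that $\mathcal{R}$ commutes with convolution by the radial kernel $\varphi_t$, i.e.\ $\varphi_t \ast \mathcal{R}(S) = \mathcal{R}(\varphi_t \ast S)$, and that $B(\mathcal{R}(S))$ is the radial part of $B(S)$; since $\varphi_t$ is radial, $\Delta$ on functions commutes with the $U_n$-action on functions, so $\mathcal{R}$ maps $\domop$ into $\domop \cap \rad$ with $\Delta \mathcal{R}(S) = \mathcal{R}(\Delta S)$. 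In particular, if $a \in \bddf$ is radial then $T_a = a \ast \Phi$ is radial and, by Proposition \ref{prop:Toeplitz in dom}, lies in $\domop$; hence $\{T_a : a \in \bddf \text{ radial}\} \subseteq \domop \cap \rad$.

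Next I would establish the three inclusions forming the cycle. The inclusion $\overline{\{T_a : a \in \bddf \text{ radial}\}} \subseteq \toepalgrad$ is immediate from the definition of $\toepalgrad$ as the $C^*$-algebra generated by radial Toeplitz operators. For $\overline{\domop \cap \rad} \subseteq \toepalgrad$: every $S \in \domop \cap \rad$ satisfies $S \in \cbuop = \toepalg$ by Proposition \ref{prop:inclusion of domain}, and $S$ radial, so $S$ lies in the radial part of the Toeplitz algebra; one then needs that the radial part of $\toepalg$ equals $\toepalgrad$, which follows from applying $\mathcal{R}$ to an approximating sequence of finite sums of Toeplitz operators with bounded symbols (using $\mathcal{R}(T_b) = T_{\mathcal{R}(b)}$ and $\mathcal{R}(b)$ bounded radial) together with the fact that $\mathcal{R}$ is a norm-one projection. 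Finally, for $\toepalgrad \subseteq \overline{\{T_a : a \in \bddf \text{ radial}\}}$: a generator of $\toepalgrad$ is $T_a$ with $a$ bounded radial, which trivially sits in the set; products, adjoints and sums of such generators lie in $\toepalg = \cbuop$, so their norm-closure is contained in $\toepalg$, and applying $\mathcal{R}$ as above shows any element of $\toepalgrad$ is a norm-limit of radial Toeplitz operators with bounded symbols. Chaining the three inclusions with the obvious $\overline{\{T_a : a \text{ radial}\}} \subseteq \overline{\domop \cap \rad}$ closes the loop.

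The main obstacle I anticipate is the bookkeeping around the radial conditional expectation $\mathcal{R}$: I need it to be well-defined (the $\bdd$-valued integral over $U_n$ exists, say in the strong operator topology and is norm-bounded), to land in $\rad$, to be a projection, to commute with $\psi \ast (\cdot)$ for radial $\psi$, and to intertwine with the Berezin transform in the sense $B \circ \mathcal{R} = \mathcal{R}_{\mathrm{fn}} \circ B$ where $\mathcal{R}_{\mathrm{fn}}$ averages functions over $U_n$. The key identity making the Laplacian statement work is that $\Delta$ commutes with $\mathcal{R}_{\mathrm{fn}}$, which holds because $U_n$ acts by isometric linear changes of variable (in fact orthogonal on $\R^{2n}$) and $\Delta$ is rotation-invariant. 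Once these are in place, the density statement is a formal consequence of the already-proven unconstrained density $\toepalg = \overline{\domop}$ together with the exactness of $\mathcal{R}$ on the relevant subspaces; no new hard analysis is needed. I would state the properties of $\mathcal{R}$ as a short lemma and then give the inclusion cycle in a few lines.
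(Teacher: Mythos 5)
Your proof is correct, but it takes a genuinely different and more self-contained route than the paper. The paper's proof is essentially three lines: observe that Proposition~\ref{prop:Toeplitz in dom} together with $\domop \subset \toepalg$ gives the chain $\{T_a \mid a \in \bddf \text{ radial}\} \subset \domop \cap \rad \subset \toepalg \cap \rad$, and then \emph{cite} an external result (Theorem~4.2 of \cite{DM23}) stating that $\toepalgrad = \toepalg \cap \rad = \overline{\{T_a \mid a \in \bddf \text{ radial}\}}$, which collapses the chain. You instead re-derive the hard part of that cited identity by introducing the radial conditional expectation $\mathcal{R}(S)=\int_{U_n}\pi(A)S\pi(A)^*\,dA$ and verifying that it intertwines with the Berezin transform, with convolution by radial kernels, and with the operator Laplacian. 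That is a legitimate and correctly sketched argument: $\mathcal{R}$ is a contractive projection onto $\rad$, $\mathcal{R}(T_b)=T_{\mathcal{R}(b)}$ follows from $T_b=b\ast\Phi$ and radiality of $\Phi$, and $B\circ\mathcal{R}=\mathcal{R}_{\mathrm{fn}}\circ B$ combined with rotation-invariance of $\Delta$ gives $\mathcal{R}(\domop)\subset\domop\cap\rad$. Applying $\mathcal{R}$ to an approximating net $T_{b_k}\to A$ (using Fulsche's $\toepalg=\overline{\{T_b\mid b\in\cbuf\}}$) for radial $A\in\toepalg$ shows $\toepalg\cap\rad\subset\overline{\{T_a\mid a\text{ radial}\}}$, closing your inclusion cycle. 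What your approach buys is avoiding the external reference, at the cost of establishing the conditional-expectation machinery which the paper outsources. One small remark: for this proposition alone the full Laplacian-intertwining property of $\mathcal{R}$ is not needed; your inclusion cycle only requires $\mathcal{R}(T_b)=T_{\mathcal{R}(b)}$ and contractivity, so the piece about $\Delta\mathcal{R}(S)=\mathcal{R}(\Delta S)$, while correct and potentially useful elsewhere, is extra for the statement at hand.
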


\begin{proof}
    Note that by Proposition \ref{prop:Toeplitz in dom}, and since $\domop\subset \toepalg$, we have
    \begin{align*}
        \{T_a\mid a\in \bddf \text{ is radial }\}&\subset\domop\cap \rad\\
        &\subset \toepalg\cap \rad
    \end{align*}
    But also from Theorem 4.2 in \cite{DM23}, we get 
    $$\toepalgrad=\toepalg\cap \rad=\overline{\{T_a\mid a\in \bddf \text{ is radial }\}}$$
    proving the claim
    $\toepalgrad=\overline{\domop\cap \rad}.$
\end{proof}

Furthermore, due to the fact that $\cbu$ is invariant under shifts and the fact that $\gamma_{n,a}$ is a shift of $\gamma_{1,a}$, the problem reduces to the case $n=1$. We assume $n=1$ for now and verify the proof in Proposition \ref{prop:toep algebra n}.
Let  
$$\Gamma:\rad\to \ell^\infty(\N_0),\ S\mapsto \lambda_S$$ 
be the spectral map of radial operators. 
Denote the image of $\domop\cap\rad$ under the spectral map $\Gamma$ by $\dtwo$, i.e.
$$\dtwo=\Gamma \Big(\domop\cap\rad\Big).$$
Since $\domop\cap\rad$ is dense in the radial Toeplitz algebra $\toepalgrad$ by Proposition \ref{prop:radial correspondence}, the radial Toeplitz algebra is isomorphic to $\overline{\dtwo}$. Hence it is enough to show that $$\overline{\dtwo}=\cbu.$$
As noted in \cite{EM16}, $\cbu$ is a $C^*$-algebra containing eigenvalue multi-sequences of all quasi-radial Toeplitz operators. Thus by the minimality of the radial Toeplitz algebra we have that
 $$\overline{\dtwo}\subset\cbu$$

To prove the other inclusion, we start by giving an explicit description for $\dtwo$ in terms of the difference operator.


\subsection{A characterization of the sequence space \texorpdfstring{$\dtwo$}{d}}

Here we characterize the sequence space $\dtwo$ using the difference operator. First, we prove the following useful lemma.
\begin{lemma}\label{lem:SOT_conv_Delta_B}
    Let $S_k, S\in \bdd$ s.t. $S_k\to S$ in strong operator topology. Then the following convergences hold pointwise:
    \begin{enumerate}
        \item $B(S)(z)=\lim_k B(S_k)(z)$ for all $z\in \C^n$.
        \item $\Delta B(S)(z)=\lim_{k} \Delta B(S_k)(z)$
    for all $z\in \C^n$.
    \end{enumerate}
\end{lemma}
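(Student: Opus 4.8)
The plan is to reduce both statements to the elementary fact that strong operator convergence $S_k \to S$ forces $S_k\xi \to S\xi$ in the norm of $\F$ for every fixed $\xi$, and hence $\ip{S_k\xi}{\eta} \to \ip{S\xi}{\eta}$ for every fixed pair $\xi,\eta \in \F$.

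For part (1), I would simply recall that $B(S)(z) = \ip{Sk_z}{k_z}$, where $k_z$ is the (fixed) normalized reproducing kernel in $\F$. Since $S_k \to S$ in SOT, we get $S_k k_z \to S k_z$ in norm, and therefore $B(S_k)(z) = \ip{S_k k_z}{k_z} \to \ip{S k_z}{k_z} = B(S)(z)$. Nothing beyond the definition is needed here.

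For part (2), the key point is to avoid differentiating $B(S_k)$ and then interchanging $\Delta$ (a limit in the variable $z$) with the limit in $k$; instead I would invoke the closed formula for the operator Laplacian of the Berezin transform already established in Proposition~\ref{prop:Laplacian of B(S)}. Applied with $p = \infty$, that proposition (via $\Delta\Phi = \pi\sum_{|\vm|=1}(E_\vm - \Phi)$ and the unwinding of the convolution $S\ast\Delta\Phi$ carried out there, see also \eqref{eq:Laplacian of B(S)}) gives, for every $S \in \bdd$,
$$\Delta B(S)(z) = \pi \sum_{|\vm|=1}\Big( \ip{S W_z e_\vm}{W_z e_\vm} - \ip{S W_z \one}{W_z \one} \Big),$$
a finite sum in which, for each fixed $z$, the vectors $W_z e_\vm$ and $W_z\one = k_z$ are fixed elements of $\F$ independent of the operator. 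Replacing $S$ by $S_k$ and letting $k\to\infty$, each of the finitely many inner products converges exactly as in part (1), so $\Delta B(S_k)(z) \to \Delta B(S)(z)$.

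I expect there to be no genuine obstacle; the only thing worth flagging is the choice of representation for $\Delta B(S)$. If one prefers not to cite the explicit sum, one can instead write $\Delta B(S)(z) = (S\ast\Delta\Phi)(z) = \Tr\big(S\,L_z(U\Delta\Phi U)\big)$, note that $\Delta\Phi$ is trace-class (Proposition~\ref{prop:Phi in dom}) so $L_z(U\Delta\Phi U)$ is a fixed trace-class operator, and use that a SOT-convergent sequence $\{S_k\}$ is uniformly bounded by Banach--Steinhaus, whence $\Tr(S_k\,A) \to \Tr(S\,A)$ for any fixed trace-class $A$ by dominated convergence over a singular-value expansion of $A$. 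Either route yields the claimed pointwise convergences.
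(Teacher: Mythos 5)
Your proof is correct and follows essentially the same route as the paper: part (1) is the immediate WOT consequence of SOT convergence applied to $B(S)(z)=\ip{Sk_z}{k_z}$, and part (2) invokes the explicit finite-sum formula for $\Delta B(S)(z)$ from Proposition~\ref{prop:Laplacian of B(S)} (your $\pi\sum_{|\vm|=1}(\ip{SW_ze_\vm}{W_ze_\vm}-\ip{SW_z\one}{W_z\one})$ is exactly the paper's \eqref{eq:Laplacian of B(S)} rewritten). The alternative via $\Tr(S_kA)\to\Tr(SA)$ with Banach--Steinhaus is a valid and slightly more general variant, but unnecessary here since $\Delta\Phi$ is finite rank.
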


\begin{proof} 
    The first claim holds by the weak convergence of $S_k$ to $S$. From the proof of Proposition \ref{prop:Laplacian of B(S)}, we get 
    \begin{align*}
        \Delta B(S)&(z)=\pi e^{-\pi|z|^2} \sum_{i=1}^n\Big(\pi\ip{S\big((w_i-z_i)K_z\big)}{(w_i-z_i)K_z}-\ip{SK_z}{K_z}\Big).
    \end{align*}
    Hence the Lemma is true as $S_k\to S$ in weak operator topology.
\end{proof}

Recall that the monomial basis $\{e_{m}\}_{m\in \N_0}$ for $\cF^2(\C)$. Denote by $E_{m}$ the projection onto the span of $e_{m}$. Then for $f\in \cF^2(\C)$, 
$$E_{m}f=\ip{f}{e_{m}}e_{m}$$
If $\{\lambda_m\}$ are the eigenvalues of a radial operator $S$, then $$S=\sum_{m\in \N_0} \lambda_{m} E_{m}$$
holds in strong operator topology. Hence by Lemma \ref{lem:SOT_conv_Delta_B}, the following equalities hold pointwise:
\begin{align}\label{eq:ptwise_Berezin}
    B(S)(z)= \sum_{m\in \N_0} \lambda_{m}  B(E_{m})(z);\ \ \ z\in \C
\end{align}
and
\begin{align}\label{eq:ptwise_Berezin_Laplacian}
    \Delta B(S)(z)= \sum_{m\in \N_0} \lambda_{m}  \Delta B(E_{m})(z);\ \ \ z\in \C.
\end{align}

Now we compute  $\Delta E_{m}$.
\begin{lemma}
    The projections $E_{m}$ are in $\domoptrace$. Moreover,
    $$\Delta E_{m}=\pi\big( m E_{m-1}-(2m+1)E_{m}+(m+1)E_{m+1}\big).$$
\end{lemma}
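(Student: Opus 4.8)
The strategy is to compute the Berezin transform $B(E_m)$ explicitly, apply the Euclidean Laplacian to it, and then recognize the result as a combination of $B(E_{m-1})$, $B(E_m)$, $B(E_{m+1})$; since the Berezin transform is injective on bounded operators, matching Berezin transforms forces the claimed identity $\Delta E_m = \pi(mE_{m-1} - (2m+1)E_m + (m+1)E_{m+1})$. The membership $E_m \in \domoptrace = \WW^{2,1}(\FF^2)$ then follows because $E_{m-1}, E_m, E_{m+1}$ are all finite-rank, hence trace-class, and by the Definition of $\domoptrace$ we need $\Delta B(E_m) = B(T)$ for some $T \in \cS^1(\cF^2)$.

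First I would compute $B(E_m)(z) = \ip{E_m k_z}{k_z} = |\ip{k_z}{e_m}|^2$. Using $k_z(w) = e^{\pi\bar z w - \frac{\pi}{2}|z|^2}$ and $e_m(w) = \sqrt{\pi^m/m!}\, w^m$ (for $n=1$), one gets $\ip{k_z}{e_m} = \overline{\ip{e_m}{k_z}} = \overline{e_m(z)}\,e^{-\frac{\pi}{2}|z|^2}$ — here I'd use the reproducing property $\ip{g}{k_z} = g(z)e^{-\frac{\pi}{2}|z|^2}$ — so that
$$B(E_m)(z) = \frac{(\pi|z|^2)^m}{m!}\, e^{-\pi|z|^2}.$$
Writing $s = \pi|z|^2$, this is $\frac{s^m}{m!}e^{-s}$, a Poisson-type weight. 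Next I would apply $\Delta$; since $B(E_m)$ is radial it's cleanest to use $\Delta = \frac14(\partial_r^2 + \frac1r\partial_r)$ in the real variable $r = |z| \in \R^2 \simeq \C$, or equivalently note $\Delta(g(\pi|z|^2)) = \pi^2|z|^2 g''(\pi|z|^2) + \pi g'(\pi|z|^2) = \pi\big(s g'' + g'\big)$ with $g(s) = s^m e^{-s}/m!$. A direct computation of $s g''(s) + g'(s)$ for $g(s) = s^m e^{-s}/m!$ yields a linear combination of $s^{m-1}e^{-s}/(m-1)!$, $s^m e^{-s}/m!$, and $s^{m+1}e^{-s}/(m+1)!$ with coefficients $m$, $-(2m+1)$, $m+1$ respectively — i.e. exactly $\Delta B(E_m) = \pi\big(m B(E_{m-1}) - (2m+1)B(E_m) + (m+1)B(E_{m+1})\big)$.

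The main obstacle — really the only place care is needed — is the bookkeeping in this one-variable ODE identity: verifying that
$$s\frac{d^2}{ds^2}\!\left(\frac{s^m e^{-s}}{m!}\right) + \frac{d}{ds}\!\left(\frac{s^m e^{-s}}{m!}\right) = m\,\frac{s^{m-1}e^{-s}}{(m-1)!} - (2m+1)\frac{s^m e^{-s}}{m!} + (m+1)\frac{s^{m+1}e^{-s}}{(m+1)!},$$
which is a finite elementary calculation (differentiate, collect powers of $s$, and note the $s^{m+1}/(m+1)!$ term equals $s^{m+1}/(m+1)!$ after cancelling factorials). One should also handle the boundary case $m = 0$ separately, where the $mE_{m-1}$ term drops out and the formula reads $\Delta E_0 = \pi(-E_0 + E_1)$, consistent with Proposition \ref{prop:Phi in dom} since $E_0 = \Phi$. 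Finally, to close the argument: $\Delta B(E_m)$ equals $B(\pi(mE_{m-1} - (2m+1)E_m + (m+1)E_{m+1}))$ by linearity of $B$ and the pointwise identity just established, the operator $T := \pi(mE_{m-1} - (2m+1)E_m + (m+1)E_{m+1})$ is trace-class, so $E_m \in \domoptrace$ and $\Delta E_m = T$ by the Definition of the operator Laplacian.
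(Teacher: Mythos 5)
Your proof is correct and follows essentially the same route as the paper's: compute $B(E_m)(z) = \pi^m|z|^{2m}e^{-\pi|z|^2}/m!$ via the reproducing property, apply $\Delta$ directly, and match the result against $B(E_{m-1})$, $B(E_m)$, $B(E_{m+1})$. The paper simply says ``from a direct computation'' where you have spelled out the substitution $s=\pi|z|^2$ and the one-variable identity $sg''+g'$; that elaboration, together with your explicit handling of $m=0$ and the trace-class observation, is fine and a welcome bit of detail, not a different argument.
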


\begin{proof}
    Notice that $$B(E_{m})(z)=\ip{E_{m}k_z}{k_z}=|\ip{e_{m}}{k_z}|^2=e^{-\pi|z|^2}|\ip{e_{m}}{K_z}|^2$$
Hence by reproducing formula
$$B(E_{m})(z)=|e_{m}(z)|^2e^{-\pi|z|^2}=\frac{\pi^m|z^{m}|^2e^{-\pi|z|^2}}{m!}.$$
From a direct computation, we get that
 $$\Delta B(E_{m})(z)=\pi\Big( m B(E_{m-1})(z)-(2m+1)B(E_{m})(z)+(m+1)B(E_{m+1})(z)\Big).$$
\end{proof}

Recall that the difference operator acting on a sequence $\{x_m\}$ is given by
$$\Delta x_m= x_{m+1}-x_{m}.$$
In the next lemma, we find a condition for a radial operator $S$ to be in $\domop$.

\begin{proposition}\label{prop:dtwo}
    Suppose $S\in\rad$ is given by
    $$S=\sum_{m\in \N_0} \lambda_{m} E_{m}$$
    where the convergence holds in strong operator topology and the
    bounded sequence $\{\lambda_{m}\}$ denotes the eigenvalues of $S$. 
    Define the sequence $\{\mu_m\}$ by
    $$\mu_m=  m\Delta^2\lambda_{m-1}+\Delta\lambda_{m}$$
    where we use the notation $m\Delta^2\mu_m=0$ when $m=0$.
    Then $S\in \domop$, iff $\{\mu_{m}\}$ is a bounded sequence. Moreover if $S\in \domop$,
    $$\Delta S=\sum_{m\in \N_0}\mu_{m} E_{m}.$$
\end{proposition}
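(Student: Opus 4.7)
My plan is to read off $\Delta B(S)$ directly from the spectral decomposition of $S$ and match it against the Berezin transform of a diagonal radial candidate; this converts $S\in\domop$ into a boundedness condition on the candidate's eigenvalue sequence.

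First I would apply the pointwise identity \eqref{eq:ptwise_Berezin_Laplacian} to get
$$\Delta B(S)(z)=\sum_{m\in\N_0}\lambda_m\,\Delta B(E_m)(z).$$
Substituting the three-term formula $\Delta B(E_m)=\pi\bigl(m B(E_{m-1})-(2m+1)B(E_m)+(m+1)B(E_{m+1})\bigr)$ from the preceding lemma and regrouping by $B(E_k)$-terms (absolute convergence at each fixed $z$ follows because $B(E_k)(z)=\tfrac{(\pi|z|^2)^k}{k!}e^{-\pi|z|^2}$ are Poisson weights against the bounded sequence $\{\lambda_m\}$), the coefficient of $B(E_k)(z)$ becomes $\pi[(k+1)\lambda_{k+1}-(2k+1)\lambda_k+k\lambda_{k-1}]$, with the convention $\lambda_{-1}:=0$. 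A short algebraic rewrite identifies this as $\pi(k\Delta^2\lambda_{k-1}+\Delta\lambda_k)=\pi\mu_k$, giving
$$\Delta B(S)(z)=\pi\sum_{m\in\N_0}\mu_m\,B(E_m)(z).$$

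For sufficiency, if $\{\mu_m\}$ is bounded then $T:=\pi\sum_m\mu_m E_m$ converges in SOT to a bounded radial operator, and Lemma~\ref{lem:SOT_conv_Delta_B} yields $B(T)=\pi\sum_m\mu_m B(E_m)=\Delta B(S)$; hence $S\in\domop$ with $\Delta S=T$. For necessity, suppose $S\in\domop$ with $\Delta S=T$. Radiality of $S$ makes $B(S)$, and thus $\Delta B(S)=B(T)$, a $U_n$-invariant function; by the $U_n$-equivariance and injectivity of the Berezin transform, $T$ must be radial, so $T=\sum_m\nu_m E_m$ for some bounded sequence $\{\nu_m\}$. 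Equating $\sum_m\nu_m B(E_m)(z)=\pi\sum_m\mu_m B(E_m)(z)$ and substituting $u=\pi|z|^2$ turns this into the identity $\sum_m(\nu_m/m!)u^m=\pi\sum_m(\mu_m/m!)u^m$ of entire power series in $u\geq 0$; uniqueness of Taylor coefficients then forces $\nu_m=\pi\mu_m$ for every $m$, so $\{\mu_m\}$ is bounded.

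The step I expect to require the most care is this final termwise identification: one cannot simply invoke ``linear independence'' of $\{B(E_m)\}$ inside an infinite sum, but passing to the analytic variable $u=\pi|z|^2$ reduces the identity to equality of Taylor series and closes the argument cleanly. A minor remark: this computation actually produces $\Delta S=\pi\sum_m\mu_m E_m$, with the factor of $\pi$ inherited directly from the $\pi$ appearing in $\Delta E_m$ in the preceding lemma; the ``iff'' part of the statement is of course unaffected.
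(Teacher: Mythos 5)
Your proof follows essentially the same route as the paper's: read off $\Delta B(S)$ from \eqref{eq:ptwise_Berezin_Laplacian}, substitute the three-term recurrence for $\Delta B(E_m)$, reindex, and compare coefficients. Two useful observations, though. First, you are right about the $\pi$: carrying the factor from $\Delta B(E_m)=\pi\bigl(mB(E_{m-1})-(2m+1)B(E_m)+(m+1)B(E_{m+1})\bigr)$ gives $\Delta S=\pi\sum_m\mu_m E_m$, and one can check this against Proposition~\ref{prop:Phi in dom} (taking $S=\Phi$ yields $\mu_0=-1,\ \mu_1=1$, so the formula must produce $\pi(E_1-\Phi)$) — the paper's proof silently drops the $\pi$ at the second display and the stated formula for $\Delta S$ inherits the typo. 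Second, your necessity argument is tighter than the paper's: the paper asserts $T=\sum_m\mu_m E_m$ "by injectivity of the Berezin transform and \eqref{eq:ptwise_Berezin}," which implicitly presupposes $\{\mu_m\}$ bounded; your step of first deducing radiality of $T$ from $U_n$-equivariance and then invoking uniqueness of Taylor coefficients in $u=\pi|z|^2$ is what actually closes the gap.
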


\begin{proof}
    By equation \ref{eq:ptwise_Berezin_Laplacian}, we have
    $$\Delta B(S)(z)= \sum_{m\in \N_0} \lambda_{m} \Delta B(E_{m})(z);\ \ \ z\in \C^n.$$
    Hence
    \begin{align*}
        \Delta B(S)(z) &=  \sum_{m\in \N_0} \lambda_{m} (m B(E_{m-1})(z)-(2m+1)B(E_{m})(z)\\
        &\hspace{6cm}+(m+1)B(E_{m+1})(z))\\
        &=  \sum_{m\in \N_0} ((m+1)\lambda_{m+1} -(2m+1)\lambda_{m}+m\lambda_{m-1})B(E_{m})(z) \\
        &=\sum_{m\in \N_0}\mu_m B(E_{m})(z) 
    \end{align*}
    by reindexing. Also,
    \begin{align*}
        \mu_m&= m(\lambda_{m+1}-\lambda_{m})-m(\lambda_{m}-\lambda_{m-1})+(\lambda_{m+1}-\lambda_{m})\\
        &= m\Delta^2\lambda_{m-1}+\Delta\lambda_{m}.
    \end{align*}
    Thus if $\{\mu_{m}\}$ is a bounded sequence, then $S\in \domop$ and 
    $\Delta S= \sum_{m\in \N_0} \mu_m E_{m}.$ 
    Also if $S\in \domop$ then $\Delta B(S)=B(T)$ for some $T\in \bdd$. Then
    by the injectivity of the Berezin transform and equation \ref{eq:ptwise_Berezin}, 
    $$T=\sum_{m\in \N_0}\mu_{m} E_{m}.$$
    Thus $\{\mu_{m}\}$ is a bounded sequence, completing the proof.
\end{proof}

Now we characterize the sequence space $\dtwo$ as follows.

\begin{proposition}\label{prop:dtwo condition}
    A bounded sequence $\{x_m\}$ is in $\dtwo$ iff there exists $C>0$ s.t. 
    $$| m\Delta^2x_m|\leq C.$$
    for all $m\in \N_0$.
\end{proposition}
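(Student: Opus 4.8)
The plan is to convert the statement into a purely sequence-theoretic equivalence by means of Proposition~\ref{prop:dtwo}, and then to settle that equivalence with two short summation arguments. Since $\dtwo=\Gamma(\domop\cap\rad)$ and every bounded sequence $\{x_m\}$ is the eigenvalue sequence of the bounded radial operator $S=\sum_{m\in\N_0}x_m E_m$ (which, being diagonal in the monomial basis, is automatically radial), Proposition~\ref{prop:dtwo} tells us that $\{x_m\}\in\dtwo$ precisely when the sequence $\mu_m=m\Delta^2 x_{m-1}+\Delta x_m$ (with the first term read as $0$ when $m=0$) is bounded. So the whole task reduces to showing: for a bounded sequence $\{x_m\}$, the sequence $\{\mu_m\}$ is bounded if and only if there is $C>0$ with $\abs{m\Delta^2 x_m}\le C$ for all $m$.

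First I would record the algebraic identity that powers everything: using $\Delta^2 x_{m-1}=\Delta x_m-\Delta x_{m-1}$ one gets $\mu_m=(m+1)\Delta x_m-m\Delta x_{m-1}$, i.e.\ $\mu_m=b_{m+1}-b_m$ where $b_m:=m\Delta x_{m-1}$ for $m\ge 1$ and $b_0:=0$. Telescoping then gives $\sum_{m=0}^{N}\mu_m=(N+1)\Delta x_N$ for every $N$. This handles the ``only if'' direction immediately: if $M:=\sup_m\abs{\mu_m}<\infty$, then $\abs{\Delta x_N}\le\frac{1}{N+1}\sum_{m=0}^{N}\abs{\mu_m}\le M$, so $\{\Delta x_m\}$ is bounded; and since $(m+1)\Delta^2 x_m=\mu_{m+1}-\Delta x_{m+1}$, this yields $\abs{m\Delta^2 x_m}\le\abs{\mu_{m+1}}+\abs{\Delta x_{m+1}}\le 2M$, so one may take $C=2M$.

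For the ``if'' direction, assume $\abs{m\Delta^2 x_m}\le C$ for all $m$. The contribution $m\Delta^2 x_{m-1}$ to $\mu_m$ is controlled at once: for $m\ge 2$ one has $\abs{m\Delta^2 x_{m-1}}=\frac{m}{m-1}\abs{(m-1)\Delta^2 x_{m-1}}\le 2C$, while $m=0,1$ are bounded crudely in terms of $\sup_m\abs{x_m}$. The main obstacle — the only genuinely nontrivial step — is to show that $\{\Delta x_m\}$ itself is bounded, since here the telescoping identity is not available before we know $\{\mu_m\}$ is bounded. My plan is a ``slowly varying'' argument: from $\abs{\Delta^2 x_m}\le C/m$ we get, for $k\le j\le 2k$, that $\abs{\Delta x_j-\Delta x_k}\le\sum_{m=k}^{j-1}C/m\le C$; hence if some $\abs{\Delta x_k}$ with $k\ge 1$ were larger than $C+\sup_m\abs{x_m}$, then $\Delta x_j$ would keep its sign and satisfy $\abs{\Delta x_j}\ge\sup_m\abs{x_m}$ throughout $j\in\{k,\dots,2k\}$, forcing $\abs{x_{2k+1}-x_k}=\bigl|\sum_{j=k}^{2k}\Delta x_j\bigr|\ge(k+1)\sup_m\abs{x_m}\ge 2\sup_m\abs{x_m}$, which contradicts $\abs{x_{2k+1}-x_k}\le 2\sup_m\abs{x_m}$. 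This bounds $\{\Delta x_m\}$, and then $\mu_m=m\Delta^2 x_{m-1}+\Delta x_m$ is bounded. Combining the two directions with Proposition~\ref{prop:dtwo} gives the proposition.
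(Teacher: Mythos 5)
Your argument reaches the right conclusion and both of your auxiliary lemmas are correct, but you have missed the one observation that makes the whole proposition a two-line corollary of Proposition~\ref{prop:dtwo}: since $\{x_m\}$ is given to be \emph{bounded}, the first difference $\Delta x_m = x_{m+1}-x_m$ automatically satisfies $\abs{\Delta x_m}\le 2\sup_m\abs{x_m}$. This is precisely what the paper uses. Proposition~\ref{prop:dtwo} says $\{x_m\}\in\dtwo$ iff $\mu_m = m\Delta^2 x_{m-1}+\Delta x_m$ is a bounded sequence; as $\{\Delta x_m\}$ is always bounded for bounded $\{x_m\}$, this is equivalent to boundedness of $\{m\Delta^2 x_{m-1}\}$, which in turn (writing $m\Delta^2 x_{m-1} = \tfrac{m}{m-1}(m-1)\Delta^2 x_{m-1}$ for $m\ge 2$, with $m=0,1$ trivial) is equivalent to boundedness of $\{m\Delta^2 x_m\}$. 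Because you overlooked this trivial bound, you manufactured two nontrivial arguments to re-establish boundedness of $\{\Delta x_m\}$ from scratch: the telescoping identity $\sum_{m=0}^{N}\mu_m=(N+1)\Delta x_N$ in the forward direction, and the ``slowly varying'' contradiction argument in the converse (which you single out as the heart of the proof). Both are valid, and the second is a genuinely nice piece of reasoning that would indeed be needed if the hypothesis $\{x_m\}\in\ell^\infty$ were dropped and one tried to argue from $\abs{m\Delta^2 x_m}\le C$ alone; but under the hypothesis actually stated, each of them re-derives, with considerable effort, a fact that follows in one line from the definition of boundedness.
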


\begin{proof}
    By Proposition \ref{prop:dtwo}, a sequence $\{x_m\}$ is in $\dtwo$ iff 
    $$|m\Delta^2x_m+\Delta x_{m}|\leq C_1$$
for all $m\in\N_0$ for some $C_1>0$. However, since $\{\Delta x_m\}$ is bounded, the condition reduces to the claim.
\end{proof}

\subsection{Extension}

Given a sequence $\sigma \in \cbu$, one defines the extension $\lipext$ of $\sigma$ to $\R_+$ by the interpolation formula 
\begin{align}\label{eq:interpolation}
    \lipext(x)=\sigma(m)+(\sigma(m+1)-\sigma(m))\frac{\sqrt{x}-\sqrt{m}}{\sqrt{m+1}-\sqrt{m}}
\end{align}
for $x\in \R_+$ where $m=\lfloor x\rfloor$. Then $\lipext|_{\N_0}=\sigma$ and $\lipext$ is uniformly continuous with respect to the square-root metric on $\R_+$, and also $\|\lipext\|_\infty=\|\sigma\|_\infty$. 

Furthermore, we define $\lipextR$ on $\R$ by 
\begin{equation}
    \lipextR(x)=\lipext(x^2);\ \ \ x\in\R.
\end{equation}
Then $\lipextR \in \cbuR$ where $\lipextR$ denotes the usual space of bounded uniformly continuous functions on $\R$. Proofs of these facts are easy to verify and can be found in \cite{EM16}.

\begin{proposition}
    Let $f\in \cbuR$ s.t. $f$ has a bounded second derivative. Define the sequence $\sigma$ by $\sigma_n=f(n)$ for all $n\in \N_0$. Then $\sigma\in \dtwo$. 
\end{proposition}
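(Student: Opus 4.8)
The plan is to deduce the claim from the difference-operator characterization of $\dtwo$ in Proposition~\ref{prop:dtwo condition}: a bounded sequence $\{x_m\}$ belongs to $\dtwo$ exactly when $\sup_{m\in\N_0}m\,|\Delta^2 x_m|<\infty$, where $\Delta^2 x_m=x_{m+2}-2x_{m+1}+x_m$. The sequence $\sigma$ is bounded because $f$ is, so the whole content is the estimate $\sup_m m\,|\Delta^2\sigma_m|<\infty$, where $\Delta^2\sigma_m=f(\sqrt{m+2})-2f(\sqrt{m+1})+f(\sqrt m)$. Heuristically, composing with $\sqrt{\,\cdot\,}$ contracts the unit spacing of the integers down to spacing of order $m^{-1/2}$ near $\sqrt m$, so a second difference acquires a factor of order $m^{-1}$, which is exactly what absorbs the $m$ in front.

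To make this precise I would first record that $f'$ is automatically bounded: Taylor's formula with step $1$ gives $f'(x)=f(x+1)-f(x)-\tfrac12 f''(\xi_x)$ for some $\xi_x\in(x,x+1)$, so $\|f'\|_\infty\le 2\|f\|_\infty+\tfrac12\|f''\|_\infty<\infty$. Then I set $g(x):=f(\sqrt x)$ for $x>0$ and differentiate twice:
\[
g''(x)=\frac{f''(\sqrt x)}{4x}-\frac{f'(\sqrt x)}{4x^{3/2}},
\]
from which $|g''(x)|\le M/x$ for every $x\ge 1$, with $M:=\tfrac14\big(\|f''\|_\infty+\|f'\|_\infty\big)$ (using $x^{3/2}\ge x$ there).

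Finally, for $m\ge 1$ one has $\Delta^2\sigma_m=g(m+2)-2g(m+1)+g(m)$, and since $g\in C^2$ on $[m,m+2]\subset[1,\infty)$ the integral form of the second-order Taylor remainder gives
\[
\Delta^2\sigma_m=\int_0^1\!\!\int_0^1 g''(m+u+v)\,du\,dv,
\]
whence $|\Delta^2\sigma_m|\le M/m$ and therefore $m\,|\Delta^2\sigma_m|\le M$; for $m=0$ the quantity $m\,\Delta^2\sigma_m$ vanishes. Thus $\sup_m m\,|\Delta^2\sigma_m|\le M<\infty$, and Proposition~\ref{prop:dtwo condition} yields $\sigma\in\dtwo$. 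I do not expect a genuine obstacle here: the argument is a short computation, and the only step requiring a moment of care is the bound $|g''(x)|\le M/x$, which is precisely where the hypothesis on $f''$ — and the resulting boundedness of $f'$ — is used.
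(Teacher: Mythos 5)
Your proof is correct, but it takes a genuinely different route from the paper's. You pass to $g(x)=f(\sqrt{x})$, use the chain rule to get $|g''(x)|\le M/x$ on $[1,\infty)$ (first bounding $f'$ via a Taylor argument), and then estimate $\Delta^2\sigma_m$ by writing it as an iterated integral of $g''$. The paper never differentiates the composition: it invokes the mean value theorem for second divided differences of $f$ at the unequally spaced nodes $\sqrt{n-1},\sqrt{n},\sqrt{n+1}$, producing $\tfrac12 f''(x_n)$ for some $x_n$, and then uses an algebraic identity to rewrite that divided difference as $\tfrac12(\sqrt{n+1}+\sqrt{n-1})(\sqrt{n+1}+\sqrt{n})\,\Delta^2\sigma_{n-1}+\Delta\sigma_{n-1}$; since the coefficient grows like $2n$ while $f''(x_n)$ and $\Delta\sigma_{n-1}$ stay bounded, $n\Delta^2\sigma_{n-1}$ is bounded. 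Your route makes the $1/m$ decay of $\Delta^2\sigma_m$ transparent and easy to motivate, whereas the paper's route works directly with $f$ and never needs to observe that $f'$ is bounded. One small technical remark on your version: the iterated-integral representation presumes $g'$ is absolutely continuous on $[m,m+2]$, which does hold (since $f''$ bounded makes $f'$ Lipschitz and $\sqrt{\cdot}$ is smooth away from $0$), but it is slightly cleaner to replace it by the Schwarz mean value theorem $\Delta^2\sigma_m=g''(\xi_m)$ for some $\xi_m\in(m,m+2)$, which needs no regularity of $g''$ beyond existence. You were also right to read the intended definition as $\sigma_n=f(\sqrt{n})$: the displayed $\sigma_n=f(n)$ is a misprint, as the paper's own proof and the application in Theorem~\ref{theo:dtwo closure} make clear.
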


\begin{proof}
    Let $n\in \N$. Then by the mean-value theorem for divided differences, there exists $x_n$ s.t.
    \begin{align*}
        \frac{f^{(2)}(x_n)}{2!}&=[f(\sqrt{n-1}), f(\sqrt{n}), f(\sqrt{n+1})]\\
        &=\frac{[f(\sqrt{n}), f(\sqrt{n+1})]-[f(\sqrt{n-1}), f(\sqrt{n})]}{\sqrt{n+1}-\sqrt{n-1}}\\
        &= \frac{(\sqrt{n+1}+\sqrt{n-1})}{2}\\
        &\hspace{2cm}\times\bigg( \frac{f(\sqrt{n+1})-f(\sqrt{n})}{\sqrt{n+1}-\sqrt{n}} -  \frac{f(\sqrt{n})-f(\sqrt{n-1})}{\sqrt{n}-\sqrt{n-1}} \bigg)\\
        &= \frac{(\sqrt{n+1}+\sqrt{n-1})}{2}\\
        &\hspace{.8cm}\times\big( (\sqrt{n+1}+\sqrt{n})(\sigma_{n+1}-\sigma_n)-(\sqrt{n}+\sqrt{n-1})(\sigma_n-\sigma_{n-1}) \big)\\
        &= \frac{1}{2}(\sqrt{n+1}+\sqrt{n-1})(\sqrt{n+1}+\sqrt{n})\Delta^2\sigma_{n-1}+ \Delta \sigma_{n-1}\\
        &\approx 2n\Delta^2\sigma_{n-1}+ \Delta \sigma_{n-1}
    \end{align*}
    Since $\{\Delta \sigma_{n-1}\}$ and  $\frac{f^{(2)}(x_n)}{2!}$ are bounded sequences it follows that $\{n\Delta^2\sigma_{n-1}\}$ is also bounded and hence $\sigma\in \dtwo$ by Proposition \ref{prop:dtwo condition}.
\end{proof}

\begin{theorem}\label{theo:dtwo closure}
    The sequence space $\dtwo$ is dense in $\cbu$, i.e. 
    $$\cbu=\overline{\dtwo}.$$
    Moreover the radial Toeplitz algebra $\toepalgrad$ is isomorphic to $\cbu$.
\end{theorem}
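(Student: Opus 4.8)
The plan is to establish the density $\cbu = \overline{\dtwo}$ by an explicit approximation argument, and then deduce the isomorphism statement from the work already assembled in this section. Throughout I would use the characterization from Proposition \ref{prop:dtwo condition}: a bounded sequence $\{x_m\}$ lies in $\dtwo$ precisely when $\sup_m |m\Delta^2 x_m| < \infty$. So the task reduces to showing that any $\sigma \in \cbu$ can be approximated in $\ell^\infty$-norm by bounded sequences satisfying this second-difference bound. The natural device is the extension machinery just introduced: pass from $\sigma$ to $\lipextR \in \cbuR$, approximate $\lipextR$ uniformly by a smooth function with bounded second derivative (e.g.\ via convolution with a smooth compactly supported mollifier, which preserves the sup-norm bound up to $\varepsilon$ and produces bounded derivatives of all orders), then restrict back to $\N_0$ and invoke the Proposition immediately preceding this theorem, which says that sampling a $\cbuR$-function with bounded second derivative at the integers lands in $\dtwo$.

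Concretely, the first step is: given $\sigma \in \cbu$ and $\varepsilon > 0$, form $g = \lipextR \in \cbuR$, and choose a mollification $g_\delta = g \ast \eta_\delta$ with $\eta_\delta$ a smooth bump, so that $\|g_\delta - g\|_\infty < \varepsilon$ (possible by uniform continuity of $g$) and $g_\delta$ has bounded first and second derivatives. One must check $g_\delta \in \cbuR$, which is immediate since convolution with an $L^1$ function preserves bounded uniform continuity. The second step applies the preceding Proposition to $g_\delta$: the sequence $\tau$ defined by $\tau_n = g_\delta(\sqrt{n})$ — or rather, matching the Proposition's hypothesis, $\tau_n = h(n)$ where $h(x) = g_\delta(\sqrt{x})$ is the corresponding function in the $\cbuR$-of-$\R_+$ sense — lies in $\dtwo$. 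The third step is the norm estimate: since $\lipextR(\sqrt{n}) = \lipext(n) = \sigma_n$, we get $|\tau_n - \sigma_n| = |g_\delta(\sqrt{n}) - g(\sqrt{n})| \le \|g_\delta - g\|_\infty < \varepsilon$, so $\|\tau - \sigma\|_\infty < \varepsilon$. This proves $\sigma \in \overline{\dtwo}$, and combined with the reverse inclusion $\overline{\dtwo} \subset \cbu$ already noted before the subsection, gives $\cbu = \overline{\dtwo}$.

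For the "moreover" clause, I would assemble the chain of isomorphisms already in place. By Proposition \ref{prop:radial correspondence}, $\toepalgrad = \overline{\domop \cap \rad}$; the spectral map $\Gamma$ is an isometric $*$-isomorphism from $\rad$ onto a closed subalgebra of $\ell^\infty(\N_0)$ (radial operators are simultaneously diagonalized by the monomial basis, so $\Gamma$ is just reading off eigenvalues), hence it carries $\domop \cap \rad$ onto $\dtwo$ and its closure onto $\overline{\dtwo} = \cbu$. Thus $\toepalgrad \cong \cbu$ for $n=1$. Finally one removes the restriction $n=1$: as flagged in the text, $\gamma_{n,a}(m)$ is a shift (by $n-1$) of $\gamma_{1,a}(m)$, and $\cbu$ is invariant under such shifts, so the general-$n$ radial Toeplitz algebra has the same Gelfand spectrum — this is the content promised in Proposition \ref{prop:toep algebra n}, which I would cite here.

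The main obstacle is the mollification step: one must be careful that convolving $g = \lipextR$ with a mollifier genuinely yields \emph{bounded} second derivatives with a bound independent of the fine structure of $g$ (it is — $\|g_\delta''\|_\infty = \|g \ast \eta_\delta''\|_\infty \le \|g\|_\infty \|\eta_\delta''\|_1$, finite for each fixed $\delta$), and that the sampling-at-integers hypothesis of the preceding Proposition is applied to the right function, namely to $x \mapsto g_\delta(\sqrt{x})$ rather than to $g_\delta$ itself — the square-root reparametrization is exactly what makes the bounded-second-derivative condition on $\R$ translate into the $m\Delta^2$-condition on sequences. Everything else is bookkeeping: the identification of $\Gamma$ as an isometric isomorphism and the shift-invariance of $\cbu$ are routine, and the reverse inclusion was handled before the subsection began.
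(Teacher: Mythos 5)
Your argument follows the paper's own route almost exactly: pass from $\sigma$ to its extension $\lipextR \in \cbuR$, mollify to obtain a nearby smooth function with bounded second derivative (the paper convolves with the Gaussian $\varphi$ where you use a generic $\eta_\delta$, which is immaterial), sample along $\sqrt{n}$ to land in $\dtwo$ via the preceding proposition, and assemble the isomorphism from Propositions \ref{prop:radial correspondence} and \ref{prop:toep algebra n}. One small caution: your ``or rather'' reformulation via $h(x)=g_\delta(\sqrt{x})$ should be discarded, since that $h$ need not have a bounded second derivative near the origin; your primary reading $\tau_n = g_\delta(\sqrt{n})$ is the correct way to invoke the proposition, whose statement $\sigma_n=f(n)$ is a slip for $\sigma_n=f(\sqrt{n})$ as its own divided-difference proof makes clear.
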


\begin{proof}
    Let $\sigma\in \cbu$ and let $\epsilon>0$. Then the $\lipextR\in \cbuR$. Since 
    $$\cbuR=\overline{\varphi\ast \cbuR},$$
    there exists $g\in \varphi\ast \cbuR$  s.t.
    $$\|\lipextR-g\|_\infty<\epsilon$$
    and $g$ has a bounded second derivative.
    Define the sequence $\nu$ by $\nu_n=g(n)$ for all $n\in \N_0$. Then
    $\nu\in \dtwo$ and 
    $$\|\sigma-\nu\|_\infty<\epsilon$$
    proving the theorem.
\end{proof}

The proof of the following proposition is similar to the proof of Proposition 6.2 in \cite{DO22}. However, we include it for completeness.

\begin{proposition}\label{prop:toep algebra n}
    We have that $\toepalgrad$ is independent of the dimension $n$, i.e. $$\mathfrak{T}_{rad}^{(n)}(L^\infty)\cong \mathfrak{T}_{rad}^{(1)}(L^\infty)\cong\cbu=\overline{\dtwo}.$$
\end{proposition}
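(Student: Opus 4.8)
The plan is to establish the dimensional independence by reducing everything to the already-completed $n=1$ analysis via the eigenvalue formula. The key observation is that the spectral map $\Gamma_n:\rad^{(n)}\to\ell^\infty(\N_0)$ relates the dimension-$n$ eigenvalue sequence to a dimension-$1$ one by a shift: comparing
$$\gamma_{n,a}(m)=\frac{1}{(n-1+m)!}\int_0^\infty a(\sqrt{r})\,r^{m+n-1}e^{-r}\,dr$$
with the $n=1$ formula $\gamma_{1,a}(m)=\frac{1}{m!}\int_0^\infty a(\sqrt r)r^m e^{-r}\,dr$, one sees $\gamma_{n,a}(m)=\gamma_{1,a}(m+n-1)$. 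So the range of $\Gamma_n$ on radial Toeplitz operators is exactly the set of ``shifted by $n-1$'' sequences coming from the $n=1$ case. First I would make this shift identity precise, noting that $\gamma_{n,a}$ depends only on the tail of $\gamma_{1,a}$; then I would argue that passing to the generated $C^*$-algebra and its closure is unaffected, because $\cbu$ is invariant under the shift $(\tau\sigma)_m=\sigma_{m+1}$ (this invariance was already invoked in the text right before the reduction to $n=1$), and conversely every element of $\cbu$ arises as a shift of an element of $\cbu$.

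The key steps, in order: (1) verify the shift identity $\gamma_{n,a}(m)=\gamma_{1,a}(m+n-1)$ directly from the two integral formulas; (2) conclude that, under $\Gamma_n$, the image of $\{T_a : a\in\bddf\text{ radial}\}$ is the restriction-to-a-shifted-index-set of the image of the corresponding $n=1$ set, and that taking norm closures commutes with the shift since the shift $\tau$ is an isometric $*$-homomorphism of $\bddseq$ preserving $\cbu$; (3) apply Theorem~\ref{theo:dtwo closure} in the case $n=1$ to get $\mathfrak{T}_{rad}^{(1)}(L^\infty)\cong\cbu=\overline{\dtwo}$; (4) combine with the shift argument to deduce $\mathfrak{T}_{rad}^{(n)}(L^\infty)\cong\mathfrak{T}_{rad}^{(1)}(L^\infty)$. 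Since $\tau$ and its one-sided inverse are both well-defined bounded maps on $\cbu$ (a sequence uniformly continuous in the square-root metric remains so after deleting or prepending finitely many terms, because $\rho(m,m+1)\to 0$), the two algebras are isometrically $*$-isomorphic.

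The main obstacle I expect is not analytic but bookkeeping: making the shift argument rigorous at the level of $C^*$-algebras rather than just on the dense subspace of Toeplitz eigenvalue sequences. Concretely, one must check that $\tau:\cbu\to\cbu$ is a bijective isometry — surjectivity requires showing that any $\sigma\in\cbu$ extends to a sequence on $\{-1,0,1,\dots\}$ still uniformly continuous in the (extended) square-root metric, or equivalently that prepending one term does not destroy membership, which follows because the metric $\rho(0,1)=1$ is finite and the modulus of continuity is unaffected on the rest. Once $\tau$ is known to be an isometric $*$-isomorphism of $\cbu$ onto itself carrying $\Gamma_1(\{T_a\})$ onto $\Gamma_n(\{T_a\})$ (up to the index shift), diagram-chasing the commutative square of spectral maps gives $\mathfrak{T}_{rad}^{(n)}(L^\infty)\cong\mathfrak{T}_{rad}^{(1)}(L^\infty)$, and Proposition~\ref{prop:radial correspondence} together with Theorem~\ref{theo:dtwo closure} finishes the chain of isomorphisms. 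I would also remark, as the text does, that this mirrors Proposition~6.2 in \cite{DO22}, so the argument can be kept brief.
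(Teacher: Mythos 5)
Your overall plan is the same as the paper's: both proceed from the shift identity $\gamma_{n,a}=\tau_L^{n-1}\gamma_{1,a}$, the invariance of $\cbu$ under left and right shifts, and Theorem~\ref{theo:dtwo closure} for the base case $n=1$. However, one of your bookkeeping claims would not survive a rigorous write-up: the left shift $\tau_L$ is \emph{not} a bijective isometry of $\cbu$ onto itself. It is not injective on $\cbu$ — finitely supported sequences belong to $\cbu$ (since $\rho(m,m+1)\to 0$), so $\tau_L$ has nontrivial kernel — and hence it is not isometric either; for instance $\sigma=(1,0,0,\dots)\in\cbu$ has $\|\sigma\|_\infty=1$ but $\|\tau_L\sigma\|_\infty=0$. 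So there is no commutative square in which $\tau_L^{n-1}$ itself implements an isomorphism $\mathfrak{T}_{rad}^{(1)}\cong\mathfrak{T}_{rad}^{(n)}$. What the paper actually uses, and what suffices, is weaker: $\tau_L$ is a \emph{contraction} on $\ell^\infty$ preserving $\cbu$, and it is \emph{surjective} onto $\cbu$ because $\tau_L\tau_R=\mathrm{id}$ and $\tau_R$ also preserves $\cbu$. Given $\sigma\in\cbu$ and $\varepsilon>0$, set $\sigma'=\tau_R^{n-1}\sigma\in\cbu$, approximate $\sigma'$ by some $\gamma_{1,a}$ to within $\varepsilon$, and then contractivity gives $\|\sigma-\gamma_{n,a}\|_\infty=\|\tau_L^{n-1}(\sigma'-\gamma_{1,a})\|_\infty<\varepsilon$. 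This shows $\overline{\{\gamma_{n,a}:a\in\bddf\text{ radial}\}}=\cbu$, and each $\mathfrak{T}^{(n)}_{rad}$ is then identified with $\cbu$ through its own isometric spectral map $\Gamma_n$, not through $\tau_L$. With that correction your argument lines up with the paper's proof.
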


\begin{proof}
    By Theorem \ref{theo:dtwo closure}, we have
    $$\mathfrak{T}_{rad}^{(1)}(L^\infty)\cong\cbu=\overline{\dtwo}.$$
    It is easy to verify that $\cbu$ is invariant under left and right shifts (see lemmas 3.2 and 3.3 in \cite{DO22}). Also, $\gamma_{n,a}$ is the $(n-1)^{th}$ left shift of $\gamma_{1,a}$, i.e. $\gamma_{n,a}=\tau_L^{n-1}(\gamma_{1,a})$.
    Let $\sigma\in \cbu$ and let $\epsilon>0$. Then $\sigma':=\tau_R^{n-1}(\sigma)\in \cbu$. Also 
    $$\cbu=\overline{\{\gamma_{1,a}\mid a\in \bddf\}},$$
    there exists $a\in \bddf$ s.t.
    $$\|\sigma'-\gamma_{1,a}\|_\infty<\epsilon.$$
    Then 
    \begin{align*}
        \|\sigma-\gamma_{n,a}\|_\infty &= \|\tau_L^{n-1}(\sigma')-\tau_L^{n-1}(\gamma_{1,a})\|_\infty\\
        &\leq \|\tau_L^{n-1}\| \|\sigma'-\gamma_{1,a}\|_\infty\\
        &<\epsilon.
    \end{align*}
\end{proof}

\bibliographystyle{plain} 
\bibliography{Laplacian}

\end{document}